\documentclass[a4paper,11pt,english,reqno]{amsart}
\usepackage[utf8]{inputenc}
\usepackage{amsmath,amssymb}
\usepackage{color}
\usepackage{listings}
\usepackage{hhline}
\usepackage{multirow}

\usepackage{tikz}  
\usetikzlibrary{positioning}
\usepackage{ctable} 

\usepackage{hyperref}

\usepackage{pifont}% http://ctan.org/pkg/pifont
\newcommand{\xmark}{{\Large \ding{55}}}%

\usepackage{tabularray}

\usepackage{diagbox}

\usepackage{tikz-cd}
\usepackage{longtable}

\usepackage{url}
\usepackage{enumitem}
\usepackage{pdflscape}

\usepackage{hyperref}
\newtheorem{thm}{Theorem}
\newtheorem{cor}[thm]{Corollary}
\newtheorem{lem}[thm]{Lemma}
\newtheorem{prop}[thm]{Proposition}

\newcommand{\z}{\zeta_8}

\newcommand{\Z}{{\mathbb Z}}

\newcommand{\Q}{{\mathbb Q}}

\title[Quadratic points on the Fermat quartic over number fields]{Quadratic points on the Fermat quartic\\ over number fields}

\author{Enrique Gonz\'alez--Jim\'enez}
\address{Universidad Aut{\'o}noma de Madrid, Departamento de Matem{\'a}ticas, Madrid, Spain}
\email{enrique.gonzalez.jimenez@uam.es}

\thanks{The author is supported by Grant PID2022-138916NB-I00 funded by MCIN/AEI/10.13039/501100011033 and by ERDF A way of making Europe.}

     \date{\today}

     \keywords{Elliptic curves, Fermat quartics, quadratic points}
     \subjclass{Primary 11D25, 14G05; Secondary  14H52, 11G05}
\begin{document}

\begin{abstract}
Let $C$ be a curve defined over a number field $K$. A point $P\in C(\overline{\Q})$ is called $K$-quadratic if $[K(P):K]=2$. Let $K$ be a number field such that the rank of the elliptic curves $E_1:\,y^2= x^3 + 4x$ and $E_2:\,y^2= x^3 - 4x$ over $K$ are $0$. Under the above condition, we prove that the set of $K$-quadratic points on the Fermat quartic $F_4\colon X^4+Y^4=Z^4$ is finite and computable and we provide a procedure to compute this finite set. In particular, we explicitly compute all the $K$-quadratic points if $[K:\Q]<8$. Moreover, if the degree of $K$ is odd, we prove that all the $K$-quadratic points corresponds just to the $\Q$-quadratic points. 
\end{abstract}

\maketitle

\section{Introduction}\label{Sec1}

Fermat, in 1637, proved 
that the Fermat quartic $F_4\colon X^4+Y^4=Z^4$ has only trivial solutions (i.e $XY=0$). 
Since then, numerous mathematicians have studied the solutions of this equation over number fields. Let $K$ be a number field and denote by $F_4(K)$ the set of $K$-rational points of $F_4$. In this paper we aim to determine the set of $K$-quadratic points on $F_4$:
$$
\Gamma_2(F_4,K)=\bigcup\{F_4(L) \,:\,	K\subset  L\subset \overline{\Q}\,\,\mbox{and}\,\,[L:K]=2\}.
$$
Aigner \cite{A} gives $\Gamma_2(F_4,\Q)$, proving that $\Q(\sqrt{-7})$ is the only quadratic extension that contains nontrivial solutions (note that $(1 + \sqrt{-7})^4 + (1 - \sqrt{-7})^4 = 2^4$).
Ishitsuka et al. \cite[Theorem 7.3]{I}  give $\Gamma_2(F_4,\Q(\zeta_8))$, where $\zeta_8$ is a primitive $8^\text{th}$-root of unity. Recently, Tho \cite{tho}, using a method of Mordell \cite{M}, has obtained a different proof from that of Ishitsuka et al. After completing this work, we became aware that Khawaja and Jarvis \cite[\S 3, Theorem 6.4]{KJ} had previously studied the problem of determining the points on the Fermat quartic that lie in a quadratic extension of \( \Q(\sqrt{2}) \). In particular, they proved that all such points lie in one of the following fields $\Q(\sqrt{2}, \sqrt{-1})$, $\Q(\sqrt{2}, \sqrt{-7})$, $\Q(\sqrt[4]{2})$, or $\Q(\sqrt{-1} \sqrt[4]{2})$. We have noted that the method used is essentially the same as the one employed by Tho \cite{tho}, which serves as the basis of this article. Unlike previous articles, the present work uses the knowledge of the growth of the torsion of the elliptic curves $E_1:\,y^2= x^3 + 4x$ and $E_2:\,y^2= x^3 - 4x$ to obtain generalizations of the previous results.

In the present paper, we use Mordell’s approach to provide a procedure for computing $\Gamma_2(F_4,K)$ in cases where the set is finite. In particular, if the degree of $K$ is odd and $\Gamma_2(F_4,K)$ is a finite set we prove that $\Gamma_2(F_4,K)=\Gamma_2(F_4,\Q)$. Finally, we compute the finite set $\Gamma_2(F_4,K)$ explicitly if $[K:\Q]<8$.

Note that if $\overline{\Q}$ denotes an algebraic closure of $\Q$, then $F_4(\overline{\Q})$ has the following trivial points:
$$
\{[0:\pm 1:1], [0:\pm \zeta_8^2:1], [\pm 1: 0:1], [\pm \zeta_8^2:0:1]\}\cup \{ [1:\z^j:0]\,:\, j=1,3,5,7 \}.
$$
In particular, $[0:\pm 1:1], [0:\pm \zeta_8^2:1], [\pm 1: 0:1], [\pm \zeta_8^2:0:1]\in \Gamma_2(F_4,\Q)$, and $[1:\z^j:0]\in \Gamma_2(F_4,K)$, for $j=1,3,5,7$, if and only if $\Q(\sqrt{2})\subset K$ or $\Q(\sqrt{-1})\subset K$.

\

Suppose $Z\ne 0$. The change of variables $x=X/Z$ and $y=Y/Z$ gives the affine equation
$$
F_4^0\,:\, x^4+y^4=1.
$$
Note that if $K$ is a number field and $(x,y)\in F_4^0(K)$ then $(\pm x,\pm y), (\pm y,\pm x)\in F_4^0(K)$. We will say that $(x,y)\in F_4^0(K)$ is a primitive point of $F_4^0$ (or of $F_4$) if $(x,y)\notin F_4^0(K')$ for any proper subfield $K'\subset K$. Let denote by $S^0_2(F_4,K)$ the set obtained from $\Gamma_2(F_4,K)$ after removing all trivial points, non-primitive points, and keeping only one representative $(x,y)$ for each set $\{(\pm x,\pm y), (\pm y,\pm x)\}$.

\

Let be the elliptic curves $E_1:\,y^2= x^3 + 4x$ and $E_2:\,y^2= x^3 - 4x$. The main result of this paper is the following:
\begin{thm}\label{theorem}
Let $K$ be a number field such that $\operatorname{rank}_{\Z}E_j(K)=0$, $j=1,2$. Then:  
\begin{enumerate}
\item \label{t1} $\Gamma_2(F_4,K)$ is finite and computable.
{ \item \label{t2}If $[K:\Q]$ is odd, then $\Gamma_2(F_4,K)=\Gamma_2(F_4,\Q)$.}
\item  \label{t3} If $[K:\Q]<8$, then $\Gamma_2(F_4,K)=\Gamma_2(F_4,L)$, where $L\in\{\Q,\Q(\sqrt{-1}),\Q(\sqrt{2}),\Q(\z),\Q(\alpha)\}$ ($\alpha$ satisfies $\alpha^4-2\alpha^2-1=0$) is the biggest number field such that $L\subseteq K$. Moreover, the set $S^0_2(F_4,L)$ appears in the following table 
\renewcommand{\arraystretch}{1.5}
\begin{longtable}{|c|c|}
\hline
$L$ & $S^0_2(F_4,L)$ \\
 \hline 
\endfirsthead
\multicolumn{2}{c}%
{{\bfseries \tablename\ \thetable{} -- continued from previous page}} \\
\hline
$L$ & $S^0_2(F_4,L)$ \\
 \hline 
\endhead
\hline \multicolumn{2}{|r|}{{Continued on next page}} \\ \hline
\endfoot
\endlastfoot
$\Q$ & $(\omega,\overline{\omega})$\\
\hline

$\Q(\sqrt{-1})$& $\sqrt{-1}(\omega, \overline{\omega}),\quad (\omega, \sqrt{-1}\overline{\omega}), \quad (\sqrt{-1}\omega, \overline{\omega})$
\\ % $\zeta_8^2=\sqrt{-1}$ 
\hline 
$\Q(\sqrt{2})$ & $(1/\sqrt[4]{2},1/\sqrt[4]{2}),\quad \sqrt{-1}(1/\sqrt[4]{2},1/\sqrt[4]{2})$ \\ % \sqrt{2}= zeta8^3 - zeta
\hline 
$\Q(\alpha)$ & $(\delta ,-\alpha^3 + 2 \alpha ),\quad (\sqrt{-1}\delta ,-\alpha^3 + 2 \alpha )$\\
\hline 
\multirow{4}{*}{$\Q(\zeta_8)$}   
&
 $(\z, \sqrt[4]{2}),\quad (\z^3,\sqrt[4]{2}),\quad \z(1, \z\sqrt[4]{2}),\quad \z^2(\z, \sqrt[4]{2}), \quad
(1/\sqrt[4]{2},\z^2/\sqrt[4]{2}),$\\
&

$\zeta_3\zeta_8(1,\zeta_3),\quad \zeta_3\zeta_8^3(1,\zeta_3),\quad
\zeta_3\zeta_8(1,\zeta_3\zeta_8^2),\quad
\zeta_3\zeta_8(\zeta_8^2,\zeta_3),$\\
&
$
\omega^{-1}(\overline{\omega}\z,1),\quad 
\omega^{-1}(\overline{\omega}\z^3,1),\quad 
\omega^{-1}(\overline{\omega}\z,\z^2),\quad
\omega^{-1}(\overline{\omega}\z^3,\z^2) ,
$
\\
& 
 $
\overline{\omega}^{\,-1}({\omega}\z,1),\quad 
\overline{\omega}^{\,-1}({\omega}\z^3,1),\quad
\overline{\omega}^{\,-1}({\omega}\z,\z^2),\quad
\overline{\omega}^{\,-1}({\omega}\z^3,\z^2)
$\\
\hline
\multicolumn{2}{c}{$\zeta_3=e^{2i\pi/3}$, $\omega=(1+\sqrt{-7})/2$, $\alpha$ satisfies $\alpha^4-2\alpha^2-1=0$ and $\delta$ satisfies $\delta^2=\alpha^3 - 3 \alpha$.}
\end{longtable}
 \item  \label{t4} Let $L$ be a number field such that $E_i(K)=E_i(L)$ for $i=1,2$. Then  $\Gamma_2(F_4,K)=\Gamma_2(F_4,L)$.
\end{enumerate}
\end{thm}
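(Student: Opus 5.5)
We follow Mordell's method, as in Tho \cite{tho}. The aim is to reduce a $K$-quadratic point on $F_4$ to a $K$-rational point on $E_1$ or $E_2$.

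Let $P=(x,y)\in F_4^0(L)$ be a non-trivial point with $[L:K]=2$, and let $\sigma$ be the nontrivial automorphism of $L/K$. The key observation is that $x^2$ and $y^2$ satisfy $u+v=\ldots$ type relations. Concretely, $(x^2)^2+(y^2)^2=1$, so $(x^2,y^2)$ is an $L$-point on the conic $u^2+v^2=1$. Using the quadratic structure, write $x^2=a+b\sqrt d$ and $y^2=c+e\sqrt d$ with $a,b,c,e\in K$. Then either one of $x^2,y^2$ already lies in $K$, or a $K$-linear combination does. Mordell's trick is to consider symmetric functions of $P$ and $P^\sigma$. Take the norm and trace data $s=x^2+x^{2\sigma}$, $p=x^2x^{2\sigma}$, and the analogous quantities for $y$, and eliminate using $x^4+y^4=1$ for both conjugates. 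This produces a $K$-point on a genus one curve of the form $w^2=$ quartic. After fixing a rational point, that curve is $K$-isomorphic to $E_1$ or $E_2$.

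Only these two curves appear, because $F_4$ covers the curves $v^2=u^4\pm$ (twists). The quotients of $F_4$ by involutions are $y^2=x^3\pm4x$ and their twists, and $\operatorname{Jac}(F_4)\sim E_1\times E_2\times E'$ with $E'$ isogenous to one of them over $\Q(\z)$.

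I would therefore first write down explicit maps $\phi_1,\phi_2$ from $\mathrm{Sym}^2F_4$, restricted to quadratic points, to $E_1(K)\cup E_2(K)$ with finite fibres. More precisely, for a pair $\{P,P^\sigma\}$, the map $\phi_j$ should recover $P$ from $\phi_j(\{P,P^\sigma\})$ up to finitely many choices. The expected formula is something like $(X,Y)=(2x^2y^2\cdot\ldots)$. With rank zero, $E_j(K)$ equals its torsion, which is finite and computable. Pulling back each torsion point and solving the fibre equations over $K$ then gives part (1).

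For parts (2)--(4) we use known results on torsion growth of the CM curves $E_1,E_2$ (conductor $32$, $j=1728$) over number fields. Part (4) is immediate from the construction: if $E_i(K)=E_i(L)$, then the candidate points are the same pullbacks. One still has to check that the fibre over each torsion point, which is cut out by equations with coefficients in $\Q$, gives the same quadratic points over $K$ and over $L$. I would do this by showing the fibres consist of points whose coordinates generate fields determined by the torsion point alone.

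For part (2), torsion growth in odd degree for these curves is trivial: $E_j(K)_{\rm tors}=E_j(\Q)_{\rm tors}$, since the only odd-degree growth for $j=1728$ would need a $3$-torsion field of odd degree, and one rules this out. Part (4) then reduces us to $L=\Q$.

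For part (3), I would list, for $[K:\Q]<8$, the possible torsion subgroups $E_j(K)_{\rm tors}$ and the minimal fields where each occurs. These fields are $\Q(\sqrt{-1})$, $\Q(\sqrt2)$, $\Q(\z)$ (where full $2$-power torsion and the point of order $4$ show up), $\Q(\alpha)$, and possibly $\Q(\zeta_3)$-type fields. For $\Q(\zeta_3)$-type fields I would check by hand that no new quadratic points arise. Part (4) then reduces everything to the five fields $L$. For each of these, a direct computation of the fibres reproduces the table; for $\Q(\z)$ this recovers \cite{I}, and for $\Q(\sqrt2)$ and $\Q(\alpha)$ it recovers \cite{KJ}.

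The main obstacle is the explicit step of constructing the maps to $E_1,E_2$ and controlling their fibres. One must make sure that every quadratic point, including those where $x^2$ or $y^2$ lies in $K$ (degenerate cases), maps to a $K$-point of one of the two curves and not of some twist. Twists could have positive rank and would break the argument. I would first try to handle this by splitting into the cases $x^2\in K$, $y^2\in K$, $x^2y^2\in K$, and the generic case. In each case I would show the relevant twist parameter is forced to be a square times $\pm1$ or $\pm2$, so that the resulting twist is $K$-isomorphic to $E_1$ or $E_2$.
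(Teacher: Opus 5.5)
Your outline has the right overall shape: reduce $K$-quadratic points to $K$-points on $E_1,E_2$, then use rank $0$ and torsion growth. But the reduction itself, which is the whole content of part (1) and the engine behind (2)--(4), is never constructed. The two mechanisms you do propose would not work.

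\textbf{The generic case.} Maps $\phi_j$ into a single $E_j(K)$ with finite fibres cannot do the job here. The paper sets $t=(1+y^2)/x^2=x^2/(1-y^2)$, $s=2t$, $u=x(s^2+4)/2$ and $v=xy(s^2+4)/2$. Then the \emph{same} $s$ gives $(s,u)\in E_1(L)$ and $(s,v)\in E_2(L)$. If $t\notin K$, the line $Y=\alpha_j+\beta_jX$ through $(s,u)$ (resp.\ $(s,v)$) and its conjugate is defined over $K$. Its third intersection with the curve is therefore a point $P_j\in E_j(K)$. For a fixed $P_1$, however, the slope $\beta_1\in K$ is a free parameter, since essentially every line through $P_1$ cuts out a conjugate pair on $E_1$. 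So the fibre over $P_1$ alone is the set of $K$-points of a curve, not a zero-dimensional system you can solve. Finiteness and computability come only from taking the pair $(P_1,P_2)\in E_1(K)\times E_2(K)$ and requiring both residual quadratics to equal the minimal polynomial of $s$. That is the system $Q_{\beta_1}(T)=Q_{\beta_2}(T)$ in the two unknowns $(\beta_1,\beta_2)$.

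\textbf{The degenerate case.} Your plan to force the twist parameter into $\{\pm1,\pm2\}\cdot K^{*2}$ is false. When $t\in K$ one has $u^2,v^2\in K$. If neither $u$ nor $v$ lies in $K$, then $s$ gives $K$-points on the genuine twists $E_1^{(d)}$ and $E_2^{(d)}$, where $L=K(\sqrt d)$, and $d$ is unconstrained a priori. The paper's own data show this. Over $\Q(\alpha)$, the value $s=-2\alpha$ gives the point $(\delta,-\alpha^3+2\alpha)$ with $d=\delta^2=\alpha^3-3\alpha$. Here $u,v\notin K$, and $N_{\Q(\alpha)/\Q}(\alpha^3-3\alpha)=-4<0$, so $d\notin\{\pm1,\pm2\}\cdot\Q(\alpha)^{*2}$.

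The missing idea is to multiply the two twisted equations $db_1^2=s(s^2+4)$ and $db_2^2=s(s^2-4)$. The twist cancels, and $s$ yields a $K$-point on $H_3\colon r^2=s^4-16$. This curve is $\Q$-isomorphic to $E_1$, hence has finitely many $K$-points, and $d$ is then read off from $s$. The remaining subcases, $u\in K$ or $v\in K$, give points directly on $E_1(K)$ or $E_2(K)$.

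\textbf{Part (2).} Your torsion argument only addresses $3$-torsion, which is not enough. You need that for \emph{every} odd prime $p$, a $p$-torsion point on a $j=1728$ curve over $\Q$ has even degree. The CM mod-$p$ image lies in the normalizer of a Cartan subgroup, which gives degrees $p^2-1$, $2(p-1)$ or $(p-1)^2$. You also need that $2$-power torsion grows only through quadratic steps: $[\Q(P):\Q(2P)]$ divides $2$, and $E[2]$ is defined over a field of degree at most $2$.
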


The following corollaries are immediate consequences of the preceding theorem:

\begin{cor}
We have
$$
\renewcommand{\arraystretch}{1.5}
\begin{array}{|c|c|c|c|c|c|}
\hline
K & \Q & \Q(\sqrt{-1}) & \Q(\sqrt{2}) & \Q(\alpha) & \Q(\z)\\
\hline
|\Gamma_2(F_4,K)| & 16 & 44 & 28 & 44& 188\\ 
\hline
\end{array}$$
\end{cor}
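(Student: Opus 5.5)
The corollary is a direct count built on Theorem~\ref{theorem}. Part~(\ref{t3}) applies to each listed $K$, since each has degree at most $4$. It shows that $\Gamma_2(F_4,K)$ is the union of three kinds of points:
\begin{itemize}
\item the trivial points defined over quadratic extensions of $K$;
\item the points coming from $S^0_2(F_4,L')$ for each field $L'$ of the table with $L'\subseteq K$;
\item the full symmetry orbits $\{(\pm x,\pm y),(\pm y,\pm x)\}$ of those representatives.
\end{itemize}
So I will count these orbits field by field and add the trivial points.

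\textbf{Step 1: trivial points.} There are always $8$ affine trivial points, since $[0:\pm1:1]$, $[0:\pm\zeta_8^2:1]$, $[\pm1:0:1]$ and $[\pm\zeta_8^2:0:1]$ are quadratic over $\Q$. By the remark in the introduction, the $4$ points $[1:\z^j:0]$ are added exactly when $\Q(\sqrt{-1})\subseteq K$ or $\Q(\sqrt2)\subseteq K$. I also note that $\Q(\sqrt2)\subset\Q(\alpha)$, because $\alpha^2=1\pm\sqrt2$. Likewise $\Q(\sqrt{-1})$ and $\Q(\sqrt2)$ both lie in $\Q(\z)$, while $\Q(\alpha)\not\subseteq\Q(\z)$, because $\alpha$ generates a non-Galois quartic field.

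\textbf{Step 2: orbit sizes.} For each representative $(x,y)$ I determine its orbit under the group of order $8$ generated by sign changes and the swap. The orbit has $8$ elements unless $x=\pm y$, in which case it has $4$. Among the listed points, only the two $\Q(\sqrt2)$ entries $(1/\sqrt[4]{2},1/\sqrt[4]{2})$ and $\sqrt{-1}(1/\sqrt[4]{2},1/\sqrt[4]{2})$ have $x=\pm y$. I must also check that $\Gamma_2$ adds nothing beyond these orbits, i.e.\ that Galois conjugates of a representative already lie in its orbit. For example, $\overline{(\omega,\overline\omega)}=(\overline\omega,\omega)$ is the swap. This is exactly how $S^0_2$ was defined, taking one representative per symmetry class.

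\textbf{Step 3: assembling the counts.}
\begin{itemize}
\item $\Q$: $8+8=16$.
\item $\Q(\sqrt{-1})$: $16+4+3\cdot8=44$.
\item $\Q(\sqrt2)$: $16+4+2\cdot4=28$.
\item $\Q(\alpha)$: $28+2\cdot8=44$.
\item $\Q(\z)$: the points coming from $\Q(\sqrt{-1})$ and $\Q(\sqrt2)$ number $44+28-20=52$, since the $16$ points over $\Q$ and the $4$ points at infinity are shared. The $17$ new representatives each contribute an orbit of $8$, giving $52+136=188$.
\end{itemize}

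The main risk is bookkeeping rather than mathematics. I must ensure that orbits of distinct representatives are disjoint, which holds because the representatives are distinct modulo the symmetries and primitive over different fields. The only real verification is the orbit-size check in the $\Q(\z)$ row, namely that none of the $17$ entries satisfies $x^4=y^4$ with $x=\pm y$.
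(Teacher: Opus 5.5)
Your count is correct and follows the paper's route. The paper treats the corollary as an immediate consequence of the table in Theorem~\ref{theorem}~(\ref{t3}): you add the trivial points to the symmetry orbits of the listed representatives (size $8$, or $4$ when $x=\pm y$) over all subfields in the lattice, exactly as you do. One correction: your claim that Galois conjugates of a representative lie in its own orbit is false, and it is not needed. Over $\Q(\sqrt{-1})$, conjugating $\sqrt{-7}$ sends $(\sqrt{-1}\omega,\overline{\omega})$ to the swap of the different representative $(\omega,\sqrt{-1}\overline{\omega})$. The count goes through anyway because $S^0_2$ by definition contains one representative of every symmetry class.
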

\begin{cor}
Let $K$ be a number field. Then $F_4$ has non-trivial primitive points in a quadratic extension $L/K$ in the following cases:
\begin{enumerate}
\item If $K=\Q$ then $L=\Q(\sqrt{-7})$.
\item If $K=\Q(\sqrt{-1})$ then $L=\Q(\sqrt{-1}, \sqrt{-7})$.
\item If $K=\Q(\sqrt{2})$ then $L=\Q(\sqrt[4]{2})$ or $L=\Q(\sqrt{-1}\sqrt[4]{2})$.
\item If $K=\Q(\alpha)$ then $L=\Q(\alpha,\sqrt{ \alpha^3 - 3 \alpha})$ or $L=\Q(\alpha,\sqrt{- \alpha^3 + 3 \alpha})$.
\item If $K=\Q(\z)$ then $L=\mathbb{Q}\left(\zeta_8,\sqrt[4]{2}\right)$, $L=\mathbb{Q}\left( \zeta_8,\sqrt{-3}\right)$, or $L=\mathbb{Q}\left( \zeta_8,\sqrt{-7}\right)$.
\end{enumerate}
\end{cor}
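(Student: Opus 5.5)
This corollary follows from the table in part (3) of Theorem~\ref{theorem}. For each of the five fields $K\in\{\Q,\Q(\sqrt{-1}),\Q(\sqrt{2}),\Q(\alpha),\Q(\z)\}$, every non-trivial primitive $K$-quadratic point is, up to the symmetries $(\pm x,\pm y)$, $(\pm y,\pm x)$, one of the entries of $S^0_2(F_4,K)$ listed there. These symmetries do not change the field generated. So the plan is just to take each listed representative $(x,y)$, compute the field $K(x,y)$, and verify that it is a quadratic extension of $K$ equal to one of the fields in the corollary. Note that $\Q(\alpha)$ has degree $4$ and $\Q(\z)$ has degree $4$, so Theorem~\ref{theorem}(3) applies to each of these five fields. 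The rank-zero hypothesis for $E_1,E_2$ over these fields is assumed to be checked in the body of the paper, as the table presupposes.

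The field computations are as follows.

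For $K=\Q$, we have $\omega=(1+\sqrt{-7})/2$, so $K(\omega,\overline{\omega})=\Q(\sqrt{-7})$.

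For $K=\Q(\sqrt{-1})$, all three listed points generate $\Q(\sqrt{-1},\sqrt{-7})$.

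For $K=\Q(\sqrt{2})$, the point $(1/\sqrt[4]{2},1/\sqrt[4]{2})$ generates $\Q(\sqrt[4]{2})$, and $\sqrt{-1}(1/\sqrt[4]{2},1/\sqrt[4]{2})$ generates $\Q(\sqrt{-1}\sqrt[4]{2})$. Both fields contain $\sqrt{2}$ and have degree $2$ over $\Q(\sqrt2)$.

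For $K=\Q(\alpha)$, the coordinate $-\alpha^3+2\alpha$ already lies in $K$. The other coordinate, $\delta$ or $\sqrt{-1}\delta$, has square $\alpha^3-3\alpha$ or $-(\alpha^3-3\alpha)$ respectively. Hence the fields are $\Q(\alpha,\sqrt{\alpha^3-3\alpha})$ and $\Q(\alpha,\sqrt{-\alpha^3+3\alpha})$.

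For $K=\Q(\z)$, the entries fall into three families:
\begin{itemize}
\item those involving $\sqrt[4]{2}$ give $\Q(\z,\sqrt[4]{2})$;
\item those involving $\zeta_3$ give $\Q(\z,\zeta_3)=\Q(\z,\sqrt{-3})$;
\item those involving $\omega$ give $\Q(\z,\sqrt{-7})$.
\end{itemize}
In the $\omega$-family, $\omega^{-1}$ and $\overline{\omega}$ generate the same field as $\sqrt{-7}$ over $\Q(\z)$.

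The only real content beyond reading the table is checking that each of these extensions is genuinely quadratic and not trivial, i.e.\ that the relevant square root is not already in $K$. The checks are:
\begin{itemize}
\item $\sqrt{-7}\notin\Q(\z)$, since $\Q(\z)$ has only the quadratic subfields $\Q(i),\Q(\sqrt2),\Q(\sqrt{-2})$; the same argument gives $\sqrt{-3}\notin\Q(\z)$;
\item $\sqrt[4]{2}\notin\Q(\z)$, since $\Q(\sqrt[4]2)$ is not Galois;
\item $\alpha^3-3\alpha$ is not a square in $\Q(\alpha)$; one way to see this is to note that $\Q(\alpha)$ has a real embedding at which $\alpha^3-3\alpha$ and its negative take opposite signs.
\end{itemize}
Primitivity of the listed points is already built into the definition of $S^0_2$, so no further argument is needed there. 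The main (mild) obstacle is the non-square check over $\Q(\alpha)$.
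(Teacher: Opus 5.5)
Your proposal is correct and follows the paper's approach: the paper treats this corollary as an immediate read-off of the table in Theorem~\ref{theorem}\eqref{t3}, and your field identifications and checks that each extension is genuinely quadratic fill in what the paper leaves implicit. One wording fix is needed in the $\Q(\alpha)$ non-square check. As written, the claim that there is ``a real embedding at which $\alpha^3-3\alpha$ and its negative take opposite signs'' holds for every nonzero element, so it proves nothing. What you want is that the two real embeddings $\alpha\mapsto\pm\sqrt{1+\sqrt2}$ send $\alpha^3-3\alpha=\alpha(\alpha^2-3)$ to values of opposite sign, namely $\pm\sqrt{1+\sqrt2}\,(\sqrt2-2)$. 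This rules out both $\alpha^3-3\alpha$ and $-\alpha^3+3\alpha$ being squares in $\Q(\alpha)$.
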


In particular, (i) is Aigner's result and (v) is Ishitsuka et al.'s result.

\

 {\bf Remark:}  A straightforward consequence of Theorems~\ref{theorem}~\eqref{t4} is that one can compute \( \Gamma_2(F_4, K) \) for certain number fields \( K \) with \( [K : \Q] > 8 \). For instance, when \( K = \Q(\zeta_{16}) \), we have $\Gamma_2(F_4, \Q(\zeta_{16})) = \Gamma_2(F_4, \Q(\zeta_8))$, since $ E_i(\Q(\zeta_{16})) = E_i(\Q(\zeta_8)) $ for \( i = 1, 2 \).

\

This article is organized as follows. Section~\ref{quadratic} is devoted to the study of quadratic points on a curve. In particular, we describe an algorithm that, in practice, computes \( \Gamma_2(C, K) \) when the Jacobian of \( C \) has only finitely many \( K \)-rational points. In Section~\ref{modular}, we note that the Fermat quartic \( F_4 \) is isomorphic over \( \Q \) to the modular curve \( X_0(64) \), and we recall a method due to Özman and Siksek \cite{OS} to determine \( \Gamma_2(X_0(64), \Q) \). Although this method is, in principle, difficult to apply when replacing \( \Q \) with a number field \( K \), we explain in Section~\ref{algo} a technique inspired by an idea of Mordell that allows us to determine \( \Gamma_2(X_0(64), K) \), provided that \( E_1(K) \) and \( E_2(K) \) are finite. Section~\ref{sec_growth} contains another notable result of this paper, namely Proposition~\ref{prop2}, where we prove that if \( E \) is an elliptic curve defined over \( \Q \) with \( j(E) = 1728 \), then the torsion subgroup \( E(\Q)_{\mathrm{tors}} \) does not grow in any number field of odd degree. This result is of independent interest beyond the context of quadratic points on the Fermat quartic, and it plays a key role in the proof of Theorem~\ref{theorem}~\eqref{t2}. Finally, in Section~\ref{sec_proof}, we provide the proof of the main result of the article, namely Theorem~\ref{theorem}. The Appendix contains relevant data concerning certain genus one curves that are used throughout the article. We also list several number fields of degree less than 7 that satisfy the conditions in Theorem~\ref{theorem}.

This work makes extensive use of the computer algebra system \verb|Magma| \cite{MA}. The code verifying the computational claims made in this paper is available at \cite{repository}.

\section{Quadratic points on curves}\label{quadratic}
One of the fundamental results in the theory of quadratic points on curves is the following, which follows from results by  Abramovich and Harris \cite{AH} and Harris and Silverman \cite{HS} (see also \cite{Bars}).

 \begin{thm}\label{AHHS}
Let $C$ be a curve defined over a number field $K$ of genus $g\geq 2$. Then, $\Gamma_2(C,K)$ is an infinite set if and only if $C$ is hyperelliptic over $K$ or bielliptic over $K$ with a bielliptic map $C\rightarrow E$ such that the elliptic curve $E$ has positive rank over $K$. 
\end{thm}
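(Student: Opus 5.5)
The plan is to study the symmetric square $C^{(2)}$ through the Abel--Jacobi map into $J=\operatorname{Pic}^0(C)$. Fix a degree-two $K$-rational divisor class $D_0$; when $C(K)=\emptyset$ such a class may still exist, and if no $K$-rational class of degree two exists then $\Gamma_2(C,K)=\emptyset$ and there is nothing to prove. Define
$$\phi\colon C^{(2)}\to J,\qquad \{P,\sigma P\}\mapsto [P+\sigma P-D_0].$$
Each quadratic point $P$, with conjugate $\sigma P$, gives a $K$-rational point of $C^{(2)}$, and distinct quadratic points give distinct points up to the two-to-one identification $P\leftrightarrow\sigma P$. So $\Gamma_2(C,K)$ is infinite if and only if $C^{(2)}(K)$ is infinite.

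The main input is Faltings' theorem on subvarieties of abelian varieties, as generalized by Abramovich--Harris and Harris--Silverman. The image $W=\phi(C^{(2)})\subset J$ meets $J(K)$ in a finite union of translates of abelian subvarieties $x_i+A_i(K)$.

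First I handle the fibers of $\phi$. A fiber of positive dimension is a complete linear system $g^1_2$ with a $K$-rational class. If $C$ is hyperelliptic over $K$, the $K$-rational pencil gives infinitely many quadratic points: they are preimages of the infinitely many points of $\mathbb{P}^1(K)$. Conversely, if $C$ is not hyperelliptic over $K$, then every fiber of $\phi$ is finite, so $\phi$ is finite onto its image.

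Second, in the non-hyperelliptic case, infinitely many $K$-points on $W$ force some positive-dimensional translate $x+A$ with $A(K)$ infinite to lie inside $W$. Since $W$ is a surface, $A$ has dimension $1$ or $2$. If $\dim A=2$, then $W$ is a translate of an abelian surface, so $C^{(2)}$ would map finitely onto an abelian surface. I would rule this out for $g\ge 3$ by a genus comparison. For $g=2$ the curve is automatically hyperelliptic, although the hyperelliptic involution may not be defined over $K$ in the sense needed; I would treat this small case separately. So $A=E$ is an elliptic curve of positive rank, with $x+E\subset W$.

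Third, I pull the elliptic curve back to $C^{(2)}$. Its preimage gives a curve in $C^{(2)}$ whose points are degree-two divisors moving in a one-parameter family parametrized, up to a finite map, by $E$. Following Abramovich--Harris, such a family produces a degree-two map $C\to E$: each $P\in C$ lies in a bounded number of the divisors, and the family defines a correspondence. This step, showing that the correspondence actually is a bielliptic map defined over $K$ rather than merely a map of higher degree, is the main obstacle. Here I would cite Harris--Silverman's argument, which uses the Castelnuovo--Severi inequality for $g\ge 3$ together with the assumption that $C$ is not hyperelliptic.

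Conversely, if $C\to E$ is bielliptic over $K$ with $\operatorname{rank} E(K)>0$, then the preimages of the points of $E(K)$ give infinitely many quadratic points, outside the finitely many ramification points and the $K$-rational points.
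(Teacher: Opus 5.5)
The paper gives no proof of this statement; it quotes it from Abramovich--Harris and Harris--Silverman (see also Bars). Your outline is the route of those references: Faltings applied to $W=\phi(C^{(2)})$, contraction of the $g^1_2$, and an elliptic translate $x+E\subset W$ turned into a bielliptic map via the incidence correspondence. You also defer the same key step to Harris--Silverman, so your strategy matches what the paper relies on.

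There is, however, a genuine gap exactly where the qualifiers ``over $K$'' matter. Your assertion that if $C$ is not hyperelliptic over $K$ then every fiber of $\phi$ is finite is false. If $C_{\overline{\Q}}$ is hyperelliptic, the class of the $g^1_2$ is Galois-stable, hence a $K$-point of $\operatorname{Pic}^2$. So $\phi$ contracts the whole pencil to a $K$-point of $J$, whether or not the pencil contains a $K$-rational divisor. Over $K$ this fiber is the conic $C/\langle\iota\rangle$, which can be pointless (this occurs for odd $g$ when $C(K)=\emptyset$). Such a $C$ is not hyperelliptic over $K$ in the sense your first half uses (preimages of $\mathbb{P}^1(K)$), yet $\phi$ is not finite.

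Half of the repair is easy: that fiber has no $K$-points, so $\phi$ is still injective on $C^{(2)}(K)$ and $W(K)$ is infinite. But your third step invokes Harris--Silverman ``together with the assumption that $C$ is not hyperelliptic''. The geometric statement ($W_2(C)$ contains an elliptic curve $\Rightarrow$ $C$ is hyperelliptic or bielliptic) is empty for a geometrically hyperelliptic curve. So for a double cover of a pointless conic you have not shown that infinitely many quadratic points force a bielliptic map over $K$ onto a positive-rank elliptic curve, which is what the theorem claims in that case.

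What is missing is a version of the key step that does not assume non-hyperellipticity. Here is one way:
\begin{itemize}
\item Let $e\mapsto D_e$ parametrize the degree-$2$ divisors over $Y=x+B\subset W$, where $B$ is the elliptic curve over $K$ supplied by Faltings, so $B(K)$ is infinite.
\item Suppose some $p$ lies in $D_{e_1}$ and $D_{e_2}$ with $e_1\neq e_2$, and write $D_{e_i}=p+q_i$. Then $a=e_1-e_2\in B\smallsetminus\{0\}$ and $D_e+q_1\sim D_{e+a}+q_2$. These divisors are distinct for general $e$, so $|D_e+q_1|$ is a $g^1_3$ for general $e$.
\item On a hyperelliptic curve of genus $\ge 3$, every $g^1_3$ is the $g^1_2$ plus a base point. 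This forces $\iota(q_1)\in D_e$ for general $e$, giving a nonconstant map $Y\to C$, which is impossible.
\item In the non-hyperelliptic case, use instead the finiteness of $g^1_3$'s for $g\ge 4$, and the lines through $q_1$ on the plane quartic for $g=3$.
\end{itemize}
Hence every $p$ lies in exactly one $D_e$, and $p\mapsto e$ is a degree-$2$ morphism $C\to Y\cong B$. It is defined over $K$ because $x$, $B$ and the incidence correspondence are.

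Two smaller remarks. First, your worry about $g=2$ is unfounded: the canonical map $C\to\mathbb{P}(H^0(C,\Omega^1))\cong\mathbb{P}^1_K$ is a degree-$2$ morphism over $K$, so genus-$2$ curves are hyperelliptic over $K$ and both sides of the equivalence hold. Second, $\dim A=2$ is excluded for $g\ge 3$ not by a genus count, but because $W$ contains a translate of the Abel--Jacobi image of $C$, which generates $J$, forcing $A=J$. None of this affects the paper's application, since $F_4$ is non-hyperelliptic and has $K$-rational points.
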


For a non-hyperelliptic curve \( C\) of genus \( \geq 3 \) defined over a number field $K$ with finite \( J(K) \), where $J$ is the jacobian of C, and at least one point \( P_0\in C(K) \), a theoretical method exists to determine all quadratic points by computing effective degree $2$ rational divisors. This relies on the injective map \( \iota: C^{(2)}(K) \to J(K) \), where $C^{(2)}$ denote the second symmetric product of $C$, which enables the recovery of quadratic points from elements of \( J(K) \).  For each \( [D] \in J(K) \), one computes the Riemann-Roch space \( \mathcal{L}(D + 2P_0) \). If its dimension is $1$, an effective divisor of degree 2 can be determined. However,  computing \( J(K) \) is often difficult. Even when feasible, the required Riemann-Roch computations may be impractical for large groups.

Thus, while theoretically sound, this approach is computationally demanding and may not always be practical.

In Section \ref{algo}, we present a much simpler method than the previous one, based on Mordell's ideas, which allows us to determine the quadratic points of the Fermat quartic over a number field.

\section{A modular approach}\label{modular}

It is noteworthy that the Fermat quartic is $\Q$-isomorphic to the non-hyperelliptic genus $3$ modular curve $X_0(64)$ and  $E_1$ is $\Q$-isomorphic to the modular curve $X_0(32)$.

Ozman and Siksek \cite{OS} determine the quadratic points on the modular
curves $X_0(N)$, where the curve is non-hyperelliptic,
the genus is $3$, $4$ or $5$, and the Mordell--Weil group
of $J_0(N)$ is finite. In particular the case of the modular curve $X_0(64)$. Their method involves first determining the rational cuspidal subgroup \(  C_0(64)(\mathbb{Q}) \) (see \cite{OS} for definition) and bound its index in \( J_0(64)(\mathbb{Q}) \), where \( J_0(64) \) is the jacobian of $X_0(64)$. This provides a positive integer \( I \) such that \( I \cdot J_0(64)(\mathbb{Q}) \subseteq C_0(64)(\mathbb{Q}) \). Consequently, the effective degree $2$ divisors \( D \) satisfy \( [D - 2P_0] = I \cdot [D^\prime] \) for some \( [D^\prime] \in J_0(64)(\mathbb{Q}) \).  To refine the search, they apply a version of the Mordell–Weil sieve to eliminate most possibilities for \( D^\prime \). Finally, they use Riemann–Roch theory to explicitly determine the divisors \( D \).  

It would seem natural to apply this method by replacing $\Q$ by a number field $K$ which $J_0(64)(K)$ is finite. Although theoretically this seems possible, the calculations involved in number fields lead us to believe that achieving our goal would not be easy, assuming everything worked as expected. For this reason, we propose a much simpler method in Section \ref{algo} that works efficiently.

\begin{lem}\label{ranklema}
We have $J_0(64)\stackrel{\Q}{\sim} E_1^2\times E_2$. In particular, if  $K$ is a number field, then $\operatorname{rank}_{\Z}J_0(64)(K)=2 \operatorname{rank}_{\Z}E_1(K)+\operatorname{rank}_{\Z}E_2(K)$
\end{lem}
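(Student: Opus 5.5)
The plan is to split $J_0(64)$ into isogeny factors by exhibiting three independent maps from $F_4\cong X_0(64)$ onto genus one curves isomorphic to $E_1$ or $E_2$. The rank formula then follows at once, since isogenous abelian varieties over $\Q$ have the same Mordell--Weil rank over every number field $K$. Indeed, an isogeny $\phi$ and its dual-type quasi-inverse $\psi$ satisfy $\psi\circ\phi=[n]$, so $\phi$ has finite kernel on $K$-points and the ranks agree. Ranks are also additive over products, so $\operatorname{rank}_{\Z}J_0(64)(K)=2\operatorname{rank}_{\Z}E_1(K)+\operatorname{rank}_{\Z}E_2(K)$.

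For the isogeny decomposition I would work directly with the model $x^4+y^4=1$ and use its quotient maps.
\begin{itemize}
\item The map $(x,y)\mapsto (x^2,y)$ lands on $u^2=1-y^4$, a genus one curve with the rational point $(u,y)=(1,0)$. It is $\Q$-isomorphic to an elliptic curve of the form $Y^2=X^3\pm 4X$.
\item The symmetric map $(x,y)\mapsto(x,y^2)$ gives a second quotient, isomorphic to the same curve.
\item A third quotient comes from the involution $(x,y)\mapsto(y,x)$. Writing $s=x+y$ and $p=xy$, we have $x^4+y^4=(s^2-2p)^2-2p^2=1$, a conic bundle over the $s$-line that yields another genus one curve.
\end{itemize}
I would compute the $j$-invariants and conductors of these three quotients. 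All have $j=1728$ and conductor $32$ or $64$, which identifies each of them with $E_1$ ($=X_0(32)$, conductor $32$) or $E_2$ (conductor $64$). The expected outcome is two copies of $E_1$ and one copy of $E_2$.

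Alternatively, the modular route uses $J_0(64)^{\rm new}$ and the oldforms. The space of weight 2 cusp forms $S_2(\Gamma_0(64))$ has dimension $3$. The newform of level $32$, attached to $E_1\cong X_0(32)$, contributes two oldforms $f(q)$ and $f(q^2)$. The unique newform of level $64$ corresponds to the curve of conductor $64$, which is $E_2$ up to isogeny. Shimura's construction together with the Eichler--Shimura decomposition then gives $J_0(64)\sim_{\Q} E_1^2\times E_2$. I would present this as the main argument, citing standard tables such as Cremona's to identify the curves 32a and 64a with $E_1$ and $E_2$.

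The main obstacle is making sure the three quotient elliptic curves are independent, that is, that the pullbacks of their differentials span $H^0(F_4,\Omega^1)$, which is spanned by $dx/y^3$, $x\,dx/y^3$, $y\,dx/y^3$. Independence ensures the induced map $J\to E_1^2\times E_2$ has finite kernel, and both sides have dimension $3$, so it is an isogeny. On the modular side this is automatic from the newform/oldform decomposition. The only remaining check is the identification of the level $64$ isogeny class with $y^2=x^3-4x$, which is a direct conductor computation. I would rely on the modular argument and mention the explicit quotient maps only as a concrete check.
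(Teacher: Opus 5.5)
Your main argument, the newform/oldform decomposition $S_2(\Gamma_0(64))=\langle f(q),f(q^2)\rangle\oplus\langle g\rangle$, is correct, but it is not the paper's route. The paper argues along the lines of your ``concrete check''. It uses the involutions $\phi_1\colon[X:Y:Z]\mapsto[-X:Y:Z]$, $\phi_2\colon[X:Y:Z]\mapsto[X:-Y:Z]$ and $\phi_3\colon[X:Y:Z]\mapsto[X:Y:-Z]$, whose quotients are $E_1$, $E_1$ and $E_2$, and concludes from $g(F_4)=3$.

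The paper leaves implicit why this works. Since $\phi_1\phi_2=\phi_3$ and $F_4/\langle\phi_1,\phi_2\rangle$ is the conic $u^2+v^2=1$ (with $u=x^2$, $v=y^2$), Kani--Rosen gives $J(F_4)\sim_{\Q}E_1^2\times E_2$. Equivalently, the invariant differentials $x\,dx/y^3$, $dx/y^2$ and $dx/y^3$ form a basis of $H^0(F_4,\Omega^1)$.

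Your modular route is shorter to state but imports more machinery: Eichler--Shimura, the isogeny decomposition of $J_0(N)$ into new parts of lower levels, and modularity/Faltings (or Cremona's tables) to identify the level-$64$ factor with $E_2$. The paper's route is elementary and works on $F_4$ directly, without passing through $X_0(64)$. It also produces the very quotient curves $E_1,E_2$ that drive the later Mordell-type procedure. Your rank argument via a quasi-inverse isogeny is fine and fills in what the paper only asserts.

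Your explicit check, however, would fail as you set it up, because the swap $\sigma\colon(x,y)\mapsto(y,x)$ does not produce $E_2$.
\begin{itemize}
\item With $s=x+y$ and $p=xy$, the relation $x^4+y^4=1$ becomes $2(p-s^2)^2=s^4+1$, i.e.\ $w^2=2(s^4+1)$. Its Jacobian is $Y^2=X^3-1296X\cong Y^2=X^3-X$, which has conductor $32$ and is $2$-isogenous to $E_1$.
\item Consistently, using $dy=-x^3dx/y^3$ one finds $\sigma^*(dx/y^3)=-dx/y^3$, $\sigma^*(x\,dx/y^3)=-dx/y^2$ and $\sigma^*(dx/y^2)=-x\,dx/y^3$. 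So the $\sigma$-invariant differential is $x\,dx/y^3-dx/y^2$, which lies in the span of the differentials coming from your first two quotients.
\end{itemize}
Your three maps are therefore dependent and give no isogeny $J\to E_1^2\times E_2$. In fact $dx/y^3$, which spans the $E_2$-part, is anti-invariant under both $(x,y)\mapsto(y,x)$ and $(x,y)\mapsto(-y,-x)$, so no swap-type quotient can see $E_2$.

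The correct third involution is $(x,y)\mapsto(-x,-y)$. With $t=y/x$ and $w=1/x^2$ the quotient is $w^2=t^4+1$, whose Jacobian is $Y^2=X^3-324X\cong E_2$ and whose invariant differential is $dx/y^3$. With this choice your independence check goes through, and it is precisely the paper's proof.
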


\begin{proof}
The following table shows, in the first column, three involutions in $F_4$ along with their corresponding definitions. Finally, the last column presents the quotient elliptic curve obtained from these involutions.
$$
\renewcommand{\arraystretch}{1.5}
\begin{array}{|c|r|c|}
\hline
\phi & \phi([X:Y:Z]) & F_4/\langle \phi \rangle \\
\hline
\phi_1 & [-X:Y:Z] & E_1 \\
\hline
\phi_2 & [X:-Y:Z] & E_1 \\
\hline
\phi_3 & [X:Y:-Z] & E_2 \\
\hline
\end{array}
$$
Since the genus of $X_0(64)$ is $3$, the above table shows
$$
J_0(64)\stackrel{\Q}{\sim} E_1^2\times E_2.
$$
In particular, $\operatorname{rank}_{\Z}J_0(64)(K)=2 \operatorname{rank}_{\Z}E_1(K)+\operatorname{rank}_{\Z}E_2(K)$
\end{proof}
\section{On torsion growth of elliptic curves.}\label{sec_growth}
The following result is of independent interest. It will directly lead to the deduction of the second part of Theorem \ref{theorem}.

\begin{prop}\label{prop2}
Let $E$ be an elliptic curve defined over $\Q$ with $j(E)=1728$ and $K$ a number field of odd degree. Then $E(K)_{\text{tors}}=E(\Q)_{\text{tors}}$.
\end{prop}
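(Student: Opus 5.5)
Every elliptic curve over $\Q$ with $j=1728$ is a quartic twist $E_d\colon y^2=x^3+dx$ with $d\in\Q^*$ fourth-power free. The plan is to show that for each prime $p$ no point of order $p$, and no extra $p$-power torsion, appears over an odd-degree field $K$. We treat $p=2$ and $p$ odd separately. The basic tool is that if $P\in E(K)$ has order $n$, then $\Q(P)\subseteq K$ has odd degree. So it suffices to show that every point $P\in E(\overline{\Q})_{\text{tors}}\setminus E(\Q)$ has $[\Q(P):\Q]$ even. In fact we only need this for points $P$ that could lie in a torsion group strictly containing $E(\Q)_{\text{tors}}$: those with $pP\in E(\Q)_{\text{tors}}$ for some prime $p$.

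\textbf{Step 1 ($p$ odd).} Here I would use the CM structure. $E_d$ has CM by $\Z[i]$, and $i$ is not in any odd-degree field. For a CM elliptic curve over $\Q$, the image of $\rho_{E,p}$ lies in the normalizer of a Cartan subgroup, and for $p\nmid 2d$ that Cartan has index $2$, with the nontrivial coset not fixing $\Q(i)$.

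Suppose $P\in E(K)$ has odd prime order $p$. Then the $\mathbb{F}_p$-line $\langle P\rangle$ is stable under $\mathrm{Gal}(\overline{\Q}/K)$. If $p\equiv 3\pmod 4$, the Cartan is nonsplit and acts without fixed lines, so the odd-degree image (contained in the normalizer but not in the Cartan) would need an element fixing $P$. I would argue via the known classification of degrees of CM torsion points (Bourdon--Clark--Stankewicz): for $j=1728$, the least degree of a point of odd order $p$ over $\Q$ is even, namely $(p-1)/2$ or $(p^2-1)/\ldots$.

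The concrete route is the known list of possible $\Q(P)$ degrees. A $p$-torsion point on $E_d$ has $x$-coordinate a root of the $p$-division polynomial $\psi_p$. One checks that the orbit sizes of Galois acting on nonzero points of $E[p]$ are all even. The reason is that complex conjugation composed with $[i]$ acts, and more simply $[-1]$ lies in the image: $[-1]$ is in the image of $\mathrm{Gal}(\overline{\Q}/\Q(i))$ acting via $\Z[i]^*$ when $p\nmid 2d$. Hence every orbit is a union of pairs $\{P,-P\}$ with $P\ne -P$, and so has even size. Primes $p\mid d$ have additive reduction; I would handle them using the bound $[\Q(P):\Q]\ge$ an even number from the same Galois-orbit argument, after checking that $[-1]$ is still in the image, since $\Q(E[p])\supseteq\Q(\zeta_p)$ and the determinant is surjective.

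\textbf{Step 2 ($p=2$).} The $2$-torsion points are $(0,0)$ and $(\pm\sqrt{-d},0)$, defined over $\Q$ or over a quadratic field. So $E(K)[2]=E(\Q)[2]$ for odd $[K:\Q]$. For $4$-torsion I would halve a rational $2$-torsion point $T$. The points $Q$ with $2Q=T$ have $x$-coordinates satisfying explicit quadratics, e.g. $x^2=d$ for $T=(0,0)$, and their $y$-coordinates lie in a field of degree a power of $2$. Hence $Q\in E(K)$ forces $Q\in E(\Q)$. The same applies to $8$-torsion, using Fujita's/Najman's classification of rational torsion on $y^2=x^3+dx$: the torsion is $\Z/2$, $\Z/4$ or $(\Z/2)^2$, and halving any such point stays in a $2$-power extension.

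\textbf{Main obstacle.} The main obstacle is making the odd-$p$ argument clean at primes of bad reduction and at $p\mid d$. I expect the cleanest fix is to cite Najman's classification of torsion of CM curves over odd-degree fields, or González-Jiménez's results on torsion growth over cubic and odd-degree fields, which show $E(K)_{\text{tors}}=E(\Q)_{\text{tors}}$ for $j=1728$ once $p$-torsion with $p\ge3$ is excluded.
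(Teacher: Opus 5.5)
Your reduction (it suffices that every $P\in E(\overline{\Q})_{\text{tors}}\setminus E(\Q)$ with $pP\in E(\Q)_{\text{tors}}$ for some prime $p$ has $[\Q(P):\Q]$ even) is correct. So is your $2$-primary step: since $E[2]$ is defined over a field of degree at most $2$, the Galois action on the halves of a point of $E(\Q)$ factors through a group of order dividing $8$. This plays the role of the paper's appeal to $[\Q(P):\Q(2P)]\mid 2$.

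The gap is the odd-$p$ step. Its load-bearing claim, $-1\in\rho_{E,p}(G_\Q)$, is asserted but never proved.
\begin{itemize}
\item For $p\nmid 2d$ the claim amounts to the theorem that $H=\rho_{E,p}(G_{\Q(i)})$ is the whole Cartan $(\Z[i]/p)^*$. The phrase ``acting via $\Z[i]^*$'' does not give this. CM theory alone (the ray class field mod $p$) only yields $H\mu_4=(\Z[i]/p)^*$, i.e.\ index dividing $4$.
\item For $p\mid d$ your stated reason does not work: surjectivity of $\det$ does not place $-1$ in a subgroup of $\operatorname{GL}_2(\mathbb{F}_p)$ (consider $\{\operatorname{diag}(a,1)\}$).
\item The Bourdon--Clark--Stankewicz statement is garbled as quoted. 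Since $(p-1)/2$ is odd when $p\equiv 3\pmod 4$, it would not give evenness.
\end{itemize}
As written, the odd case therefore rests entirely on the classification you defer to at the end. That is what the paper does: it combines Zywina's description of $G_E(p)$ for $j=1728$ with the list of possible degrees $[\Q(P):\Q]$ in [GJN1, Thm.~5.6], all of which are even.

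If you want a self-contained argument, your determinant idea suffices once you use it for a parity count rather than to produce $-1$. Let $P$ have odd prime order $p$, and suppose $F=\Q(P)$ has odd degree, so $i\notin F$. Since $E[p]$ is free of rank one over $\Z[i]/p$, $H$ lies in $C=(\Z[i]/p)^*$. Every $\sigma$ with $\sigma(i)=-i$ satisfies $\sigma\circ[i]=[-i]\circ\sigma$, because $[i](x,y)=(-x,iy)$. Hence such $\sigma$ map outside $C$, and $|G_E(p)|=2|H|$. Such $\sigma$ exist in $G_F$, and when $p=\pi\bar\pi$ splits they carry $E[\pi]$ onto $E[\bar\pi]$, so $P$ lies on neither eigenline. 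In every remaining case the stabilizer of $P$ in $C$ is trivial. Then $G_{F(i)}$ acts trivially on $E[p]$, so $|G_E(p)|$ divides $[F(i):\Q]=2[F:\Q]\equiv 2\pmod 4$. But $\det(H)=\mathbb{F}_p^*$ because $\Q(i)\cap\Q(\zeta_p)=\Q$, so $2\mid |H|$ and $4\mid |G_E(p)|$, a contradiction. Nothing here depends on the reduction type, so primes $p\mid d$ need no separate treatment.
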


\begin{proof}
First of all, notice that since $j(E)=1728$, $E$ is $\Q$-isomorphic to $E_k\,:\,y^2=x^3+kx$ for some fourth-power-free integers $k$ (see \cite[Appendix A \S3]{Sil2}). It is well known that (see \cite[Prop. 6.1(a)]{Sil}): 
$$
E_k(\Q)_{\text{tors}}\simeq 
\left\{
\begin{array}{cll}
\Z/4\Z & &  \mbox{if $k=4$},\\
\Z/2\Z \times \Z/2\Z & &  \mbox{if $-k$ is a perfect square},\\
\Z/2\Z & &  \mbox{otherwise}.
\end{array}
\right.
$$
Let us prove that if \( K \) is a number field such that \( E(K)_{\text{tors}} \ne E(\Q)_{\text{tors}} \), then its degree is even. To this end, it suffices to prove that if \( P = (x,y) \in E(\overline{\Q})_{\text{tors}} \) with \( P \notin E(\Q) \), then \( [\Q(P) : \Q] \) is even, where $\Q(P)=\Q(x,y)$. Specifically, it is sufficient to consider \( P \) of odd prime order \( p \) or of a power of \( 2 \) order .

Let $p$ be a prime and $G_E(p)$ be the image (up to conjugacy into $\operatorname{GL}_2(\mathbb F_p))$ of the mod $p$ Galois representation on the $p$-torsion of $E$. In the complex multiplication case, the possibilities for $G_E(p)$ are completely understood; thanks of the work of Zywina \cite[\S 1.9]{Z}. Since $j(E)=1728$, $E$ has complex multiplication by $\Z[\sqrt{-1}]$ and we have
\begin{itemize}
    \item $p=2$, then $G_E(p)$ is $B(2)$ or $\text{C}_s(2)$,
    \item $p\equiv 1\pmod{4}$, then $G_E(p)$ is $\text{C}^+_{ns}(p)$ or $\text{GL}_2(\mathbb F_p)$,
    \item $p\equiv 3\pmod{4}$, then $G_E(p)$ is $\text{C}^+_s(p)$ or $\text{GL}_2(\mathbb F_p)$,
\end{itemize}
where $B(p)$ (resp. $\text{C}_s(p)$, $\text{C}_{ns}(p)$) denotes the Borel (resp. split Cartan, non-split Cartan) subgroup of $\text{GL}_2(\mathbb F_p)$ and $\text{C}^+_s(p)$ (resp. $\text{C}^+_{ns}(p)$) is the normalizer of $\text{C}_s(p)$ (resp. $\text{C}_{ns}(p)$) in $\text{GL}_2(\mathbb F_p)$.

Let $p$ be a prime and $P\in E[p]$ be a point of order $p$ in the elliptic curve $E$. Then \cite[Theorem 5.6]{GJN1} gives the possible degrees $[\Q(P) : \Q]$ depending on $G_E(p)$. In the particular case $j(E)=1728$ we have
\begin{itemize}
    \item $p=2$, then $[\Q(P) : \Q]\in\{1,2\}$,
    \item $p\equiv 1\pmod{4}$, then $[\Q(P) : \Q]=p^2-1$,
    \item $p\equiv 3\pmod{4}$, then $[\Q(P) : \Q]\in\{p^2-1,2(p-1),(p-1)^2\}$.
\end{itemize}
Therefore, if $p$ is an odd prime, then  $[\Q(P) : \Q]$ is even. 

To finish the proof, let $P$ be a point of order a power of $2$ not defined over $\Q$. Thanks to \cite[Theorem 4.6]{GJN1} we see that $[\Q(P) : \Q(2P)]$ divides $2$. In particular, this proves that $[\Q(P) : \Q]$ is even, since the points of order $2$ in $E$ are defined over $\Q$ or over a quadratic number field.
\end{proof}

\section{A Procedure Inspired by Mordell's approach} \label{algo}
In this section, giving a number field $K$, we provide a procedure that determines the set $S^0_2(F_4,K)$ once the sets $E_1(K)$ and $E_2(K)$ are known.
 
Let $L=K(\sqrt{d})$ be a quadratic extension of $K$, where $d\in K$ with $d\not\in K^2$. Since $x\neq 0$, $y^2\neq \pm 1$, and $x^4+y^4=1$, we have $(1+y^2)/x^2=x^2/(1-y^2)$.  Let 
\begin{equation}\label{PT1}
 t=\dfrac{1+y^2}{x^2}\left(=\dfrac{x^2}{1-y^2}\right).
 \end{equation}
It is a straightforward computation to check that $t\not\in \{0, \pm 1, \pm \zeta_8^2\}$. It directly follows from $x^4+y^4=1$ and \eqref{PT1} that
 \begin{equation}\label{EQ4}
 x^2=\dfrac{2t}{t^2+1},\,y^2=\dfrac{t^2-1}{t^2+1}.\end{equation}
Let $s=2t$, $2u=x(s^2+4)$, and $2v=xy(s^2+4)$. We deduce from \eqref{EQ4} that 
\begin{equation}\label{E1E2}
\begin{array}{l}
E_1\,:\,u^2=s^3+4s,\\
E_2\,:\,v^2=s^3-4s.
\end{array}
\end{equation}
We are going to split our procedure into two parts depending on whether $t\in K$.

\noindent $\bullet$ {\bf Step I: ${t\in K}$}. Let $u=a_1+b_1\sqrt{d}$ and $v=a_2+b_2\sqrt{d}$, where $a_1,a_2,b_1,b_2\in K$. From \eqref{E1E2}, $u^2,v^2\in K$. Since $\sqrt{d}\not\in K$,  $a_1b_1=a_2b_2=0$. We consider the following cases:
\begin{itemize}
\item $b_1=0$. Then $u=a_1\in K$.  Thus $(s,u)\in E_1(K)$. 
\item $b_2=0$. Then $v=a_2\in K$.  Thus $(s,v)\in E_2(K)$. 
\item $b_1 b_2\ne 0$. Then $a_1=a_2=0$. It follows from \eqref{E1E2} that
$$
\begin{array}{l}
db_1^2=s(s^2+4),\\
db_2^2=s(s^2-4).
\end{array}
$$
Hence, 
$$
H_3\,:\, r^2=(s^2+4)(s^2-4),
$$
where $r=db_1b_2v/s\in K$. Thus $(s,r)\in H_3(K)$. Note that the curve $H_3$ is $\mathbb Q$-isomorphic to $E_1$ by the map: \[(x,y)\in E_1\mapsto \left(\dfrac{2(x+2)}{x-2},\dfrac{16 y}{(x - 2)^2}\right)\in H_3.\]
\end{itemize}
  In this part of the procedure, we need to compute the $K$-rational points of the curves \( E_1 \), \( E_2 \), and \( H_3 \). From the first coordinate of each point, we obtain \( s \), and subsequently calculate \( x^2 \) and \( y^2 \), and finally calculate $(x,y)\in S^0_2(F_4,K)$.
  
\noindent $\bullet$ {\bf Step II: ${t\notin K}$}. Let $Q(T)\in K[T]$ be the monic minimal polynomial of $s$ over $K$. Since $[L:K]=2$ and $t\notin K$, then $\mathrm{deg}Q(T)=2$ and $L=K(s)$. By \eqref{E1E2}, there exist $\alpha_1,\alpha_2 ,\beta_1,\beta_2 \in K$ such that
$$
\begin{array}{rcl}
s^3+4s&=&(\alpha_1+\beta_1 s)^{2}, \\
s^3-4s&=&(\alpha_2+\beta_2 s)^{2}. 
\end{array}
$$
Thus, there exist $r_{1}, r_{2} \in K$ such that
$$
\begin{array}{rcl}
T^3+4T-(\alpha_1+\beta_1 T)^{2}&=&Q(T)\left(T-r_{1}\right),\\
T^3-4T-(\alpha_2+\beta_2 T)^{2}&=&Q(T)\left(T-r_{2}\right).
\end{array}
$$
Hence, $\left(r_{j}, \alpha_j+\beta_j r_{j}\right)\in E_j(K)$ for $j=1,2$. For  $P_j=(x_j,y_j)\in E_j(K)$, let $Q_{\beta_{j}}(T)\in K[T]$, $j=1,2$, such that 
$$
\begin{array}{rcl}
T^3+4T-(y_1-\beta_1 x_1+\beta_1 T)^{2}&=&Q_{\beta_1}(T)\left(T-x_{1}\right),\\
T^3-4T-(y_2-\beta_2 x_2+\beta_2 T)^{2}&=&Q_{\beta_2}(T)\left(T-x_{2}\right).
\end{array}
$$
Note that for $j=1,2$, replacing $P_j$ by $-P_j$ is equivalent to changing $\beta_j$ by $-\beta_j$. Therefore, we only need to implement the computations with one element in each set $\{\pm P_j\}$.

Finally, we find $\beta_1,\beta_2\in K$ by solving the system of quadratic equations coming from the equality $Q_{\beta_1}(T)=Q_{\beta_2}(T)$, 
and then $Q(T)$, the minimal polynomial of $s=t/2$ over K. That is, we obtain \( s \), and subsequently calculate \( x^2 \) and \( y^2 \), and finally $(x,y)\in S^0_2(F_4,K)$.

\section{Proof of Theorem \texorpdfstring{\ref{theorem}}{}}\label{sec_proof}

\subsection*{Proof of Theorem \texorpdfstring{\ref{theorem} \eqref{t1}}{}} In Section \ref{algo}, given a number field \( K \), we provided a procedure that allows the computation of \( \Gamma_2(F_4,K) \) from the points of \( E_1(K) \) and \( E_2(K) \). In the particular case where $\operatorname{rank}_{\Z}E_j(K)=0$, $j=1,2$, these sets are finite, and thus \( \Gamma_2(F_4,K) \) is also finite. This proves the first part of Theorem \ref{theorem}.

A different proof comes from the Theorem \ref{AHHS} along with the decomposition of the Jacobian of $F_4$ established in Lemma \ref{ranklema}.
\subsection*{Proof of Theorem \texorpdfstring{\ref{theorem} \eqref{t2}}{}}
Notice that $j(E_1)=j(E_2)=1728$, therefore if \( K \) is a number field of odd degree, Proposition \ref{prop2} asserts that $E_j(K)_{\text{tors}}=E_j(\Q)_{\text{tors}}$, $j=1,2$. Moreover, in the particular case where \( \text{rank}_\mathbb{Z} E_j(K) = 0 \) we have $E_j(K)=E_j(\Q)$, $j=1,2$. Theorem \ref{theorem}\eqref{t2} follows from the procedure described in Section \ref{algo}. That is, \( \Gamma_2(F_4,K)=\Gamma_2(F_4,\Q) \).

\subsection*{Proof of Theorem \texorpdfstring{\ref{theorem} \eqref{t3}}{}}
Let $K$ be a number field such that $\operatorname{rank}_{\Z}E_j(K)=0$, $j=1,2$. Then the procedure given in Section \ref{algo} allows the computation of \( \Gamma_2(F_4,K) \) from the points of \( E_1(K)_\text{tors} \) and \( E_2(K)_\text{tors} \). In particular, if $L$ is the maximal subfield contained in $K$ such that $E_j(K)_\text{tors}=E_j(L)_\text{tors}$, for $j=1,2$, then \( \Gamma_2(F_4,K)= \Gamma_2(F_4,L) \). This restricts the study to number fields in which the torsion grows in \( E_1 \) and/or \( E_2 \). In the appendix, it is shown that such fields are $\Q(\sqrt{-1})$, $\Q(\sqrt{2})$, $\Q(\zeta_8)$ and $\Q(\alpha)$, where $\alpha$ satisfies $\alpha^4-2\alpha^2-1=0$. Note that for those fields the rank of the elliptic curves $E_1$ and $E_2$ are $0$. Therefore, it suffices to compute \( \Gamma_2(F_4,K) \) for those fields and \( \Gamma_2(F_4,\Q)\). For this purpose, for each of those number fields $K$, the required data for these points are provided in the appendix.

{\bf Step I.} The affine points in $E_1(K), E_2(K)$, and $H_3(K)$. 

{\bf Step II.} We give a table where for each pair of points $(P_1,P_2)$, $P_1\in E_1(K)$ and $P_2\in E_2(K)$, we show the symbol \xmark \,\, if there does not exist solutions $(\beta_1,\beta_2)$ in $K$ or the corresponding quadratic polynomial $Q(x)$ is reducible or $t\in \{0, \pm 1, \pm \sqrt{-1}\}$. Otherwise, we show the corresponding irreducible quadratic minimal polynomial $Q(x)$ of $s$ and the point in $S^0_2(F_4,K))$.

\subsection{\texorpdfstring{$\Gamma_2(F_4,\Q)$}{}} Then $K=\Q$ and
$$
\begin{array}{l}
E_1(\Q)=\{(0,0),(2,\pm 4),[0:1:0]\},\\
E_2(\Q)=\{(0,0),(\pm 2,0),[0:1:0]\},\\
H_3(\Q)=\{(\pm 2,0),[1:\pm 1:0]\}.
\end{array}
$$

{\bf Step I.}  For each value of $s$ we obtain the following points in $\Gamma_2(F_4,\Q)$:
$$
\renewcommand{\arraystretch}{1.5}
\begin{array}{|c|c|c|}
\hline 
s & (x,y)\in \Gamma_2(F_4,\Q)) & \text{Minimal polynomial of $b$ over $\Q$}\\
\hline
 0 & (0 , b )& T^2 + 1\\ 
 \hline
 2& (1 , 0 )& T-1 \\ %T^4 - 2 T^2 - 1\\ 
 \hline
 -2& (b , 0 )& T^2 + 1\\ 
 \hline
\end{array}
$$
Note that those points in  $\Gamma_2(F_4,\Q)$ are trivial points. 

\

\noindent {\bf Step II.} We determine the following table 
\begin{center}
\renewcommand{\arraystretch}{1.5}
\begin{longtable}{|c|c|c|}
\hline
\backslashbox{$P_2$}{$P_1$} & $(0,0)$ & $(2,4)$\\
\hline 
%----------------------------------------------
$(0,0)$ & \xmark & \xmark \\
\hline 
%----------------------------------------------
$(2,0)$ &  \xmark &  $\begin{array}{c} (-s - 1 , -s)\\ x^2+x+2  \end{array}$ \\
\hline 
%----------------------------------------------
$(-2,0)$  &  \xmark   & \xmark\\
\hline 
\end{longtable}  
\end{center}
%----------------------------------------------
Therefore, $S^0_2(F_4,\Q)=\{(\omega,\overline{\omega})\}$, where $\omega=(1+\sqrt{-7})/2$ and $\overline{\omega}=(1-\sqrt{-7})/2$.
 
This finishes the proof for the computation of $\Gamma_2(F_4,K)$. In particular, we have showed  that if $F_4$ has non-trivial points in a quadratic number field $\Q(\sqrt{d})$, then $d=-7$, which is Aigner's result (cf. \cite{A}). 

\subsection{\texorpdfstring{$\Gamma_2(F_4,\Q(\sqrt{-1}))$}{}} Then $K=\Q(\sqrt{-1})$ and
$$
\begin{array}{l}
E_1(\Q(\sqrt{-1}))=E_1(\Q)\cup\{(-2, \pm 4 \sqrt{-1}), (\pm 2 \sqrt{-1}, 0)\},\\
E_2(\Q(\sqrt{-1}))=E_2(\Q),\\
H_3(\Q(\sqrt{-1}))=H_3(\Q)\cup \{(0 , \pm 4\sqrt{-1} ), (\pm 2\sqrt{-1} , 0 )\}.
\end{array}
$$
{\bf Step I.}  For each value of $s\notin \Q$ we compute the following points in $\Gamma_2(F_4,\Q(\sqrt{-1}))$:
$$
\renewcommand{\arraystretch}{1.5}
\begin{array}{|c|c|c|}
\hline 
s & (x,y)\in \Gamma_2(F_4,\Q(\sqrt{-1})) & \text{Minimal polynomial of $b$ over $\Q$}\\
\hline
 2\sqrt{-1}  & (-b, 1) &  T^2 + \sqrt{-1}\\
 \hline
 -2\sqrt{-1} & (-b, 1) & T^2 - \sqrt{-1}\\
 \hline
\end{array}
$$
Note that those points in $\Gamma_2(F_4,\Q(\sqrt{-1}))$ are trivial points. 

\

\noindent {\bf Step II.} We calculate the following table 
\begin{center}
\setlength\LTleft{-10mm}
\renewcommand{\arraystretch}{1.5}
\begin{longtable}{|c|c|c|c|c|c|}
\hline
\backslashbox{$P_2$}{$P_1$} & $(0,0)$ & $(2,4)$ & $(-2, 4 \sqrt{-1})$ & $(2 \sqrt{-1}, 0)$ & $(-2 \sqrt{-1}, 0)$\\
\hline 
%----------------------------------------------
$(0,0)$ & \xmark & \xmark & \xmark & \xmark & \xmark \\
\hline 
%----------------------------------------------
$(2,0)$ & \xmark & $\begin{array}{c} (-s - 1 , -s)\\ x^2+x+2  \end{array}$  &  $\begin{array}{c} (-1/2\sqrt{-1}s , -1/2\sqrt{-1}s + \sqrt{-1}) \\ x^2-2 x+8 \end{array}$  & \xmark & \xmark \\
\hline
%----------------------------------------------
$(-2,0)$ & \xmark & $\begin{array}{c}
(-1/2 s , -1/2\sqrt{-1} s - \sqrt{-1}) \\
x^2+2 x+8 \\ \end{array}$ & $\begin{array}{c}  
(-\sqrt{-1}s + \sqrt{-1} , -s)\\
x^2- x+2  \end{array}$ & \xmark & \xmark \\
 \hline
\end{longtable}  
\end{center}
Therefore, $S^0_2(F_4,\Q(\sqrt{-1}))=\{\sqrt{-1} (\omega,\overline{\omega}), 
 (\sqrt{-1} \omega,\overline{\omega}), (\omega,\sqrt{-1}\overline{\omega})
\}$.
This finishes the proof for the computation of $\Gamma_2(F_4,\Q(\sqrt{-1}))$.

\subsection{\texorpdfstring{$\Gamma_2(F_4,\Q(\sqrt{2}))$}{}} Then $K=\Q(\sqrt{2})$ and
$$
\begin{array}{l}
E_1(\Q(\sqrt{2}))=E_1(\Q),\\
E_2(\Q(\sqrt{2}))=E_2(\Q)\cup \{
\pm(2(\sqrt{2} + 1), 4(\sqrt{2} +1), 
\pm (2(-\sqrt{2} + 1), 4(\sqrt{2} - 1))
\},\\
H_3(\Q(\sqrt{2}))=H_3(\Q).
\end{array}
$$
{\bf Step I.}  For each value of $s\notin \Q$, we obtain the following points in $\Gamma_2(F_4,\Q(\sqrt{2}))$:
$$
\renewcommand{\arraystretch}{1.5}
\begin{array}{|c|c|c|}
\hline 
s & (x,y)\in \Gamma_2(F_4,\Q(\sqrt{2})) & \text{Minimal polynomial of $b$ over $\Q$}\\
\hline
2\sqrt{2} + 2 & (-b , -b )  & T^2 - 1/2\sqrt{2}\\
\hline
-2\sqrt{2} + 2 &  (-b , -b ) & T^2 + 1/2\sqrt{2} \\
 \hline
\end{array}
$$
We determine the points $(1/\sqrt[4]{2},1/\sqrt[4]{2}),\sqrt{-1}(1/\sqrt[4]{2},1/\sqrt[4]{2})$ in $S^0_2(F_4,\Q(\sqrt{2}))$.
\newpage
\noindent {\bf Step II.} We compute the following table 
\begin{center}
\renewcommand{\arraystretch}{1.5}
\begin{longtable}{|c|c|c|}
\hline
\backslashbox{$P_2$}{$P_1$} & $(0,0)$ & $(2,4)$\\
\hline 
%----------------------------------------------
$(0,0)$ & \xmark & \xmark \\
\hline 
%----------------------------------------------
$(2,0)$ & \xmark
&   $\begin{array}{c} (-s - 1 , -s)\\ x^2+x+2  \end{array}$  \\
\hline 
%----------------------------------------------
$(-2,0)$  &  \xmark  & \xmark\\
\hline 
%----------------------------------------------
$(2(\sqrt{2} + 1), 4(\sqrt{2} +1))$ & \xmark & \xmark\\
\hline
$(2(-\sqrt{2} + 1), 4(-\sqrt{2} + 1))$ & \xmark & \xmark\\
\hline
\end{longtable}  
\end{center}
%----------------------------------------------
Therefore, $S^0_2(F_4,\Q(\sqrt{2})))=\{(1/\sqrt[4]{2},1/\sqrt[4]{2}),\sqrt{-1}(1/\sqrt[4]{2},1/\sqrt[4]{2})
\}$.
This finishes the proof for the computation of $S^0_2(F_4,\Q(\sqrt{2}))$.

\subsection{\texorpdfstring{$\Gamma_2(F_4,\Q(\alpha))$}{}} 
Then $\sqrt{2}=1-\alpha^2$, $K=\Q(\alpha)$,  and 
$$
\begin{array}{l}
E_1(\Q(\alpha))=E_1(\Q)\cup  \{\pm S_{1,1},\pm S_{1,2}\},\\
E_2(\Q(\alpha))=E_2(\Q(\sqrt{2})),\\
H_3(\Q(\alpha))=H_3(\Q)  \cup  \{(\pm 2\alpha , \pm 4(\alpha^3 - \alpha))\}.
\end{array}
$$
{\bf Step I.}  For each value of $s\notin\Q(\sqrt{2})$ we determine the following points in $\Gamma_2(F_4,\Q(\alpha))$:
$$
\renewcommand{\arraystretch}{1.5}
\begin{array}{|c|c|c|}
\hline 
s & (x,y)\in \Gamma_2(F_4,\Q(\alpha))) & \text{Minimal polynomial of $b$ over $\Q(\alpha)$}\\
\hline
  -2 \alpha& (b , -\alpha^3 + 2 \alpha )& T^2  -\alpha^3 + 3 \alpha\\ 
 \hline
  -2 \alpha^3 + 2 \alpha^2 + 2 \alpha &  (-\alpha^3 + 2 \alpha , b) &  T^2  -\alpha^3 + 3 \alpha\\ 
 \hline
 2 \alpha & (b , -\alpha^3 + 2 \alpha ) &  T^2 + \alpha^3 - 3 \alpha\\
 \hline
 2 \alpha^3 + 2 \alpha^2 - 2 \alpha &  (-\alpha^3 + 2 \alpha , b) &  T^2 + \alpha^3 - 3 \alpha\\ 
 \hline
\end{array}
$$
We obtain the points $(\delta ,-\alpha^3 + 2 \alpha ), (\sqrt{-1}\delta ,-\alpha^3 + 2 \alpha )$ in $S^0_2(F_4,\Q(\alpha))$, where $\delta$ satisfies $\delta^2=\alpha^3 - 3 \alpha$.

\noindent {\bf Step II.} We compute the following table 
\begin{center}
\renewcommand{\arraystretch}{1.5}
\begin{longtable}{|c|c|c|c|c|}
\hline
\backslashbox{$P_2$}{$P_1$} & $(0,0)$ & $(2,4)$ & $S_{1,1}$ & $S_{1,2}$\\
\hline 
%----------------------------------------------
$(0,0)$ & \xmark & \xmark & \xmark &\xmark\\
\hline 
%----------------------------------------------
$(2,0)$ & \xmark
&  $\begin{array}{c} (-s - 1 , -s)\\ x^2+x+2  \end{array}$ & \xmark  &\xmark\\
\hline 
%----------------------------------------------
$(-2,0)$  & \xmark    & \xmark & \xmark  &\xmark\\
\hline 
%----------------------------------------------
$(2(\sqrt{2} + 1), 4(\sqrt{2} +1))$ & \xmark & \xmark & \xmark&\xmark\\
\hline
$(2(-\sqrt{2} + 1), 4(-\sqrt{2} + 1))$ & \xmark & \xmark &\xmark & \xmark \\
\hline
\end{longtable}  
\end{center}
No new points coming by the second step. Therefore, 
$$S^0_2(F_4,\Q(\alpha))=\{(\delta ,-\alpha^3 + 2 \alpha ), (\sqrt{-1}\delta ,-\alpha^3 + 2 \alpha )
\}.
$$
This finishes the proof for the computation of $\Gamma_2(F_4,\Q(\alpha))$.
\subsection{\texorpdfstring{$\Gamma_2(F_4,\Q(\zeta_8))$}{}}
Then $K=\Q(\z)$ and
$$
\begin{array}{l}
E_1(\Q(\zeta_8))=E_1(\Q(\sqrt{-1}))\cup\{\pm R_{1,1},\pm R_{1,2},\pm R_{1,3},\pm R_{1,4}\},\\
E_2(\Q(\zeta_8))=E_2(\Q)\cup \{ \pm R_{2,1},\pm R_{2,2},\pm R_{2,3},\pm R_{2,4}\},\\
H_3(\Q(\zeta_8))=H_3(\Q(\sqrt{-1}))\cup 
(\pm 2\zeta_8^3 ,\pm 4(\zeta_8^3 + \zeta_8)),(\pm 2\zeta_8 ,\pm 4(\zeta_8^3 + \zeta_8))\}.\end{array}
$$

 {\bf Step I.}   For each value of $s\notin\Q(\sqrt{-1})=\Q(\z^2)$ and $s\notin\Q(\sqrt{2})=\Q(\z^3-\z)$, we deduce the following points in $\Gamma_2(F_4,\Q(\zeta_8))$: 
$$
\renewcommand{\arraystretch}{1.5}
\begin{array}{|c|c|c|}
\hline 
s & (x,y)\in \Gamma_2(F_4,\Q(\zeta_8))) & \text{Minimal polynomial of $b$ over $\Q(\z)$}\\
\hline
2\z& (-b , -\z )& T^2+\sqrt{2}\\ % T^2 + \z^3 - \z\\     % (z^3-z)^2=2
 \hline 
  -2\z^3 - 2\z^2 - 2\z& (-\z , -b )& T^2+\sqrt{2}\\ % T^2 + \z^3 - \z\\     % (z^3-z)^2=2 
 \hline 
 -2\z^3& (-b , -\z^3 )& T^2+\sqrt{2}\\ % T^2 + \z^3 - \z\\     % (z^3-z)^2=2
 \hline 
 2\z^3 + 2\z^2 + 2\z& (-\z^3 , -b )& T^2+\sqrt{2}\\ % T^2 + \z^3 - \z\\     % (z^3-z)^2=2 
 \hline 
 -2\z& (-b , -\z )& T^2-\sqrt{2}\\ % T^2 - \z^3 + \z\\     % (z^3-z)^2=2
 \hline 
 2\z^3 - 2\z^2 + 2\z& (-\z , -b )& T^2-\sqrt{2}\\ % T^2 - \z^3 + \z\\     % (z^3-z)^2=2
 \hline 
 2\z^3& (-b , -\z^3 )& T^2-\sqrt{2}\\ % T^2 - \z^3 + \z\\     % (z^3-z)^2=2
 \hline 
  -2\z^3 + 2\z^2 - 2\z& (-\z^3 , -b )&T^2-\sqrt{2}\\ % T^2 - \z^3 + \z\\     % (z^3-z)^2=2
 \hline 
  2\z^3 - 2\z - 2& (-b , -\z^2b )& T^2 + 1/2(-\z^3 + \z)\\ 
  \hline 
  -2\z^3 + 2\z - 2&(-b , -\z^2b )& T^2 + 1/2(\z^3 - \z)\\ 
 \hline 
\end{array}
$$
We obtain the points $(\z, \sqrt[4]{2}),(\z^3,\sqrt[4]{2}),(\z, \z^2\sqrt[4]{2}),(\z^3, \z^2\sqrt[4]{2})$ and $(1/\sqrt[4]{2},\z^2/\sqrt[4]{2})$ in $S^0_2(F_4,\Q(\zeta_8))$. 

\

\noindent {\bf Step II.} We compute table \ref{step2zeta8}. {
In the next table, for each pair of points $P_j\in E_j(\Q(\z))$, $j=1,2$ such that the corresponding position in the table \ref{step2zeta8} is not a \xmark\, and it is primitive in $\Q(\z)$, we simplify the corresponding point in $S^0_2(F_4,\Q(\zeta_8))$:
$$
\renewcommand{\arraystretch}{1.5}
\begin{array}{|c|c|}
\hline
\begin{array}{cc}
(1/4(-\z^3 + \z)s + 1/2(-\z^3 + \z) , 
1/4(-\z^3 + \z)s + 1/2(\z^3 + \z)) \\
s^2+(2+2\z^2) s-4 \z^2=0
\end{array}
      &  (\zeta_3 \zeta_8^3,-\zeta_3^2\zeta_8^3)\\
  \hline
  \begin{array}{c}  
(1/4(-\z^3 + \z)s + 1/2(-\z^3 + \z) , 
1/4(-\z^3 + \z)s + 1/2(-\z^3 - \z)) \\
s^2 + (-2\z^2 + 2)s + 4\z^2=0
\end{array}
  &  (-\zeta_3^2 \zeta_8,\zeta_3\zeta_8) \\
  \hline
\begin{array}{c}  
(1/4(-\z^3 - \z)s + 1/2(\z^3 + \z) ,
 1/4(-\z^3 + \z)s + 1/2(\z^3 + \z))\\
s^2 + (2\z^2 - 2)s + 4\z^2 =0
 \end{array} & (-\zeta_3 \zeta_8^3,-\zeta_3^2\zeta_8) \\
 \hline
 \begin{array}{c}
(1/4(-\z^3 - \z)s + 1/2(\z^3 + \z) ,
 1/4(-\z^3 + \z)s + 1/2(-\z^3 - \z))\\
s^2 + (-2\z^2 - 2)s - 4\z^2=0
\end{array} &  (-\zeta_3^2 \zeta_8,\zeta_3\zeta_8^3) \\
  \hline
\end{array}  
$$

$$
\renewcommand{\arraystretch}{1.5}
  \begin{array}{|c|c|}
  \hline
\begin{array}{c}
(1/8(-\z^2 + 1)s + 1/4(-\z^2 - 3), 
 1/8(-\z^3 - \z)s + 1/4(3\z^3 + 3\z))\\
s^2 + (-6\z^2 - 6)s + 4\z^2=0
\end{array} & \omega^{-1}(-\z^2,-\overline{\omega}\z)\\
\hline
\begin{array}{c}
(1/8(-\z^2 + 1)s + 1/4(-3\z^2 - 1) ,
 1/8(-\z^3 - \z)s + 1/4(-3\z^3 - 3\z))\\
s^2 + (-6\z^2 + 6)s - 4\z^2=0
\end{array} & \overline{\omega}^{\,-1}(-1,\omega\z^3) \\
\hline
\begin{array}{c}
(-1/2\z s + 1/2\z^3 ,
 -1/2\z^2s + 1/2)\\
s^2 + \z^2s - 2=0
\end{array}  & \omega^{-1}(-\overline{\omega}\z^3,1) \\
\hline
\begin{array}{c}
(-1/4\z^3s + \z , -1/4s )\\ 
s^2 + 2\z^2s- 8=0
\end{array}  & \overline{\omega}^{\,-1}(-\omega\z,\z^2)\\
\hline\hline
\begin{array}{c}
(1/8(-\z^2 - 1)s + 1/4(-\z^2 + 3),  
1/8(-\z^3 - \z)s + 1/4(3\z^3 + 3\z))\\
s^2 + (6\z^2 - 6)s - 4\z^2=0
\end{array} & \overline{\omega}^{\,-1}(-\z^2,-\omega\z^3) \\
\hline
\begin{array}{c}
(1/8(-\z^2 - 1)s + 1/4(-3\z^2 + 1) ,
 1/8(-\z^3 - \z)s + 1/4(-3\z^3 - 3\z))\\
 s^2 + (6\z^2 + 6)s + 4\z^2=0
\end{array}& \omega^{-1}(1,\overline{\omega}\z)\\
\hline
\begin{array}{c}
(-1/4\z s + \z^3 , -1/4s )\\
s^2 - 2\z^2s - 8=0
\end{array} & \omega^{-1}(-\overline{\omega}\z^3,-\z^2)\\
\hline
\begin{array}{c}
(-1/2 \z^3 s + 1/2 \z , -1/2 \z^2 s - 1/2 )	\\ 
s^2 - \z^2 s - 2=0
\end{array}     &  \omega^{-1}(-\overline{\omega}\z,-1)\\
  \hline
\end{array}
$$
We obtain the following points in  $S^0_2(F_4,\Q(\zeta_8))$: 
$$
\begin{array}{cc}
(\zeta_3\zeta_8,\zeta_3^2\zeta_8),\quad(\zeta_3\zeta_8^3,\zeta_3^2\zeta_8^3),\quad
(\zeta_3\zeta_8,\zeta_3^2\zeta_8^3),\quad
(\zeta_3\zeta_8^3,\zeta_3^2\zeta_8),\\[2mm]
\omega^{-1}(\overline{\omega}\z,1),\quad 
\omega^{-1}(\overline{\omega}\z^3,1),\quad 
\omega^{-1}(\overline{\omega}\z,\z^2),\quad
\omega^{-1}(\overline{\omega}\z^3,\z^2) ,
\\[2mm]
\overline{\omega}^{\,-1}({\omega}\z,1),\quad 
\overline{\omega}^{\,-1}({\omega}\z^3,1),\quad
\overline{\omega}^{\,-1}({\omega}\z,\z^2),\quad
\overline{\omega}^{\,-1}({\omega}\z^3,\z^2).
\end{array}
$$
This finishes the computation of $S^0_2(F_4,\Q(\zeta_8))$ and in particular of $\Gamma_2(F_4,\Q(\zeta_8))$. 

\subsection*{Proof of Theorem \texorpdfstring{\ref{theorem} \eqref{t4}}{}}
It is a straightforward consequence of the method described in Section~\ref{algo} for computing \( \Gamma_2(F_4, K) \) via the sets \( E_i(K) = E_i(L) \) for \( i = 1, 2 \).

\begin{landscape}
\thispagestyle{empty} 
\setcounter{table}{0}
\footnotesize
\setlength\LTleft{-26mm}
\renewcommand{\arraystretch}{1.5}
\begin{longtable}{|c|c|c|c|c|c|c|c|c|c|}
\caption{Step II: $\Q(\z)$}\label{step2zeta8}\\
\hline
\backslashbox{$P_2$}{$P_1$} & $(0,0)$ & $(2,4)$ & $(-2, 4 \sqrt{-1})$ & $(2 \sqrt{-1}, 0)$ & $(-2 \sqrt{-1}, 0)$ & $R_{1,1}$ & $R_{1,2}$ & $R_{1,3}$ & $R_{1,4}$\\
\hline 
%----------------------------------------------
$(0,0)$ & \xmark & \xmark & \xmark & \xmark & \xmark & \xmark& \xmark& \xmark & \xmark \\
\hline 
%----------------------------------------------
$(2,0)$ & \xmark
& $\begin{array}{c} (-s - 1 , -s)\\ x^2+x+2  \end{array}$  &  
$\begin{array}{c} (-1/2\z^2s , -1/2\z^2s + \z^2) \\ x^2-2 x+8 \end{array}$ &
$\begin{array}{c} (1/4(-\z^3 + \z)s + 1/2(-\z^3 + \z) , \\
1/4(-\z^3 + \z)s + 1/2(\z^3 + \z)) \\
x^2+(2+2\z^2) x-4 \z^2\end{array}$  &
$
\begin{array}{c}  
(1/4(-\z^3 + \z)s + 1/2(-\z^3 + \z) , \\
1/4(-\z^3 + \z)s + 1/2(-\z^3 - \z)) \\
x^2 + (-2\z^2 + 2)x + 4\z^2
\end{array}$& \xmark& \xmark& \xmark& \xmark \\
\hline
%----------------------------------------------
$(-2,0)$ &  \xmark
& 
$\begin{array}{c}  (-1/2\z , -1/2\z^2s - \z^2)\\ x^2+2 x+8 \end{array}$ 
&
$\begin{array}{c} (-\z^2s + \z^2 , -s)\\ x^2- x+2 \end{array}$ 
&
$\begin{array}{c}  
(1/4(-\z^3 - \z)s + 1/2(\z^3 + \z) ,\\
 1/4(-\z^3 + \z)s + 1/2(\z^3 + \z))\\
x^2 + (2\z^2 - 2)x + 4\z^2
 \end{array}$ 
 &
$\begin{array}{c}
(1/4(-\z^3 - \z)s + 1/2(\z^3 + \z) ,\\
 1/4(-\z^3 + \z)s + 1/2(-\z^3 - \z))\\
x^2 + (-2\z^2 - 2)x - 4\z^2
\end{array}$ &
\xmark& \xmark& \xmark& \xmark \\
 \hline
%----------------------------------------------
$R_{2,1}$ & \xmark& 
$\begin{array}{c}
(1/8(-\z^2 + 1)s + 1/4(-\z^2 - 3), \\
 1/8(-\z^3 - \z)s + 1/4(3\z^3 + 3\z))\\
x^2 + (-6\z^2 - 6)x + 4\z^2
\end{array}$
& 
$\begin{array}{c}
(1/8(-\z^2 + 1)s + 1/4(-3\z^2 - 1) ,\\
 1/8(-\z^3 - \z)s + 1/4(-3\z^3 - 3\z))\\
x^2 + (-6\z^2 + 6)x - 4\z^2
\end{array}$
&
$\begin{array}{c}
(-1/2\z s + 1/2\z^3 ,
 -1/2\z^2s + 1/2)\\
x^2 + \z^2x - 2
\end{array}$&
$\begin{array}{c}
(-1/4\z^3s + \z , -1/4s )\\ 
x^2 + 2\z^2x - 8
\end{array}$& \xmark& \xmark& \xmark& \xmark \\
 \hline
%----------------------------------------------
$R_{2,2}$ & \xmark &
$\begin{array}{c}
(1/8(-\z^2 - 1)s + 1/4(-\z^2 + 3), \\ 
1/8(-\z^3 - \z)s + 1/4(3\z^3 + 3\z))\\
x^2 + (6\z^2 - 6)x - 4\z^2
\end{array}$&
$\begin{array}{c}
(1/8(-\z^2 - 1)s + 1/4(-3\z^2 + 1) ,\\
 1/8(-\z^3 - \z)s + 1/4(-3\z^3 - 3\z))\\
 x^2 + (6\z^2 + 6)x + 4\z^2
\end{array}$&
$\begin{array}{c}
(-1/4\z s + \z^3 , -1/4s )\\
x^2 - 2\z^2x - 8
\end{array}$&
$\begin{array}{c}
(-1/2 \z^3 s + 1/2 \z , -1/2 \z^2 s - 1/2 )	\\ 
x^2 - \z^2 x - 2
\end{array}$&
\xmark& \xmark& \xmark& \xmark \\
 \hline
%----------------------------------------------
%----------------------------------------------
$R_{2,3}$ & \xmark & \xmark& \xmark& \xmark &\xmark & \xmark& \xmark& \xmark& \xmark \\
 \hline
%----------------------------------------------
$R_{2,4}$ & \xmark & \xmark& \xmark& \xmark &\xmark & \xmark& \xmark& \xmark& \xmark \\
 \hline
 %----------------------------------------------
$R_{2,5}$ & \xmark & \xmark& \xmark& \xmark &\xmark & \xmark& \xmark& \xmark& \xmark \\
 \hline
 %----------------------------------------------
$R_{2,6}$ & \xmark & \xmark& \xmark& \xmark &\xmark & \xmark& \xmark& \xmark& \xmark \\
 \hline
%----------------------------------------------
\end{longtable}  
\end{landscape}
}

\section*{Appendix}
In Section \ref{algo} a procedure was presented that, given a number field \( K \) such that  \( E_1(K) \) and \( E_2(K) \) are finite, allows for the determination of \( \Gamma_2(F_4,K) \). Therefore, this reduces to determine the growth of the torsion of \( E_1 \) and \( E_2 \) over number fields. Note that $E_2$ is the $-1$-twist of $E_1$. In particular, they are isomorphic over $\Q(\zeta_8)$. Let
$$
\begin{array}{lcl}
R_{1,1}=(2 (\zeta_8^3 +  \zeta_8^2 +  \zeta_8), 4 (\zeta_8^3 +  \zeta_8^2 - 1)), & & R_{2,1}=(2\zeta_8^2, 4\zeta_8^3)),\\
R_{1,2}=(2(- \zeta_8^3 -  \zeta_8^2 -  \zeta_8), 4(\zeta_8^2 +  \zeta_8 + 1)), & & R_{2,2}=(-2\zeta_8^2,  4\zeta_8),\\
R_{1,3}=(2(- \zeta_8^3 +  \zeta_8^2 -  \zeta_8), 4( \zeta_8^3 -  \zeta_8^2 + 1)), & & R_{2,3}=(2(\zeta_8^3 - \zeta_8 - 1), 4(\zeta_8^3 + \zeta_8^2 + \zeta_8)),\\
R_{1,4}=(2( \zeta_8^3 -  \zeta_8^2 +  \zeta_8), 4( \zeta_8^2 -  \zeta_8 + 1)),  & & R_{2,4}=(2(-\zeta_8^3 + \zeta_8 - 1), 4(\zeta_8^3 - \zeta_8^2 + \zeta_8)),\\
S_{1,1}=(2 (\alpha ^3 +  \alpha ^2 -  \alpha) , 4( \alpha ^3 +  \alpha ^2 + 1)), & & R_{2,5}=(2(\zeta_8^3 - \zeta_8 + 1), 4(\zeta_8^3 - \zeta_8 + 1)),\\
S_{1,2}=(2(- \alpha ^3 +  \alpha ^2 +  \alpha) , 4( \alpha ^3 -  \alpha ^2 -1)), & & R_{2,6}=(-2(\zeta_8^3 + \zeta_8 + 1), 4(\zeta_8^3 - \zeta_8 - 1)).\\
\end{array}
$$
The following table illustrates the growth of the torsion of the elliptic curve $E_1$ in cases where the degree is less than 8. This information has been obtained from\footnote{Torsion growth data was computed by the author and Filip Najman\cite{GJN2}.} the LMFDB database \cite{lmfdb}, considering that the LMFDB label for \( E_1 \) is \href{https://www.lmfdb.org/EllipticCurve/Q/32/a/4}{\texttt{32.a4}}.
In each row, the 
second column displays the structure of the torsion over the number field $K$ listed in the first column. The third column shows the points in $E_1(K)_\text{tors}$ not coming from $E_1(K')_\text{tors}$ for any proper subfield $K'$ of $K$. On the right side of the table, we display the diagram showing all the number fields (of degree $<8$) involved in the torsion growth of $E_1$. Note that $\alpha$ satisfies $\alpha^4-2\alpha^2-1=0$.
\begin{center}
\begin{tabular}{ccc}
\renewcommand{\arraystretch}{1.5}
\begin{tabular}{|c|c|c|}
\hline
$K$ & $E_1(K)_{\text{tors}}$ & Data $E_1(K)_{\text{tors}}$ \\
 \hline 
$\Q$ & $\Z/4\Z$ & $(0, 0)$, $\pm (2, 4)$\\
\hline
$\Q(\sqrt{-1})$ & $\Z/2\Z\oplus\Z/4\Z$ & $\pm (-2, -4 \sqrt{-1}),
(\pm 2 \sqrt{-1}, 0)$\\
\hline 
$\Q(\zeta_8)$ & $\Z/4\Z\oplus\Z/4\Z$ & $
\pm R_{1,1},
\pm R_{1,2},
\pm R_{1,3},
\pm R_{1,4}$\\
\hline 
$\Q(\alpha)$ & $\Z/8\Z$ & $\pm S_{1,1},\pm S_{1,2}$\\
\hline
\end{tabular} 
& &
{
\begin{tikzcd}[column sep=-1.5 em]
 \Q(\zeta_8)  & &  \Q(\alpha) \\ 
  & \Q(\sqrt{-1}) \arrow[lu, no head] \arrow[rd, no head]  &   \\
 & &  \Q  \arrow[uu, no head]  \\
 \end{tikzcd}
 }
 \end{tabular}
\end{center}

\

Similarly, for the elliptic curve $E_2$, considering that its LMFDB label is \href{https://www.lmfdb.org/EllipticCurve/Q/64/a/3}{\texttt{64.a3}}.

\begin{center}
\begin{tabular}{ccc}
\renewcommand{\arraystretch}{1.5}
\begin{tabular}{|c|c|c|}
\hline
$K$ & $E_2(K)_{\text{tors}}$ & Data $E_2(K)_{\text{tors}}$ \\
 \hline 
$\Q$ & $\Z/2\Z\oplus\Z/2\Z$ & $(0,0),\, (\pm 2, 0)$ \\
\hline 
$\Q(\sqrt{2})$ & $\Z/2\Z\oplus\Z/4\Z$ &
$
\begin{array}{cc}
\pm (2(\sqrt{2} + 1), 4(\sqrt{2} +1)) \\
\pm (2(-\sqrt{2} + 1), 4(\sqrt{2} - 1))
\end{array}$
\\
\hline 
$\Q(\zeta_8)$ & $\Z/4\Z\oplus\Z/4\Z$ & $
\pm R_{2,1},
\pm R_{2,2},
\pm R_{2,3},
\pm R_{2,4},
\pm R_{2,5},
\pm  R_{2,6}$\\
\hline 
\end{tabular} 
& &
{
\begin{tikzcd}
\Q(\zeta_8)  \\ 
\Q(\sqrt{2}) \arrow[u, no head] \arrow[d, no head]   \\
 \Q    \\
 \end{tikzcd}
 }
 \end{tabular}
\end{center}

The following diagram illustrates the lattice of number fields associated with the growth of the torsion of the elliptic curves $E_1$ and $E_2$:

\begin{center}
\begin{tikzpicture}[
    node distance=1.3cm and 1.3cm, 
    every node/.style={inner sep=1pt}, 
    every path/.style={draw} 
    ]
    \node (Q) at (0,0) {$\mathbb{Q}$};  
    \node (Qi)  at (-1,1) {$\mathbb{Q}(\sqrt{-1})$};
    \node (Qz8) at (0,2) {$\mathbb{Q}(\zeta_8)$}; 
   \node (Q2)  at (1,1) {$\mathbb{Q}(\sqrt{2})$};
          \node (Qalpha) at (2,2) {$\mathbb{Q}(\alpha)$}; 
    \draw (Q) -- (Q2);
    \draw (Q) -- (Qi);    
    \draw (Qi) -- (Qz8);
    \draw (Q2) -- (Qz8);
     \draw (Q2) -- (Qalpha);
\end{tikzpicture}
\end{center}

 \subsection*{Some computations}

In this section, we show some number fields that fulfill the conditions required for the application of Theorem \ref{theorem}.
 
\

The following list shows all the square-free integers $D\ne -1,2$, $|D|<200$, satisfying condition $\operatorname{rank}_{\Z}E_1(\Q(\sqrt{D}))=\operatorname{rank}_{\Z}E_2(\Q(\sqrt{D})=0$:
$$
-2,\pm 33, \pm 57, \pm 66, \pm 73, \pm 89, \pm 114, \pm 129, \pm 146, \pm 177, \pm 178, \pm 185.
$$

In the following table we show the defining polynomial of five number fields $K$ such that $\operatorname{rank}_{\Z}E_1(K)=\operatorname{rank}_{\Z}E_2(K)=0$ for $[K:\Q]=d$ such that $d=3,4,5$. 

\renewcommand{\arraystretch}{1.5}
\begin{longtable}{|c|c|c|}
\hline
$[K:\Q]=3$ & $[K:\Q]=4$ & $[K:\Q]=5$   \\
 \hline 
\endfirsthead
\multicolumn{3}{c}%
{{\bfseries \tablename\ \thetable{} -- continued from previous page}} \\
\hline
$[K:\Q]=3$ & $[K:\Q]=4$ & $[K:\Q]=5$   \\
 \hline 
\endhead
\hline \multicolumn{3}{|r|}{{Continued on next page}} \\ \hline
\endfoot
\endlastfoot
$x^3 - x^2 - 2x + 1$ & $ x^4 + 1$ & $ x^5 + x^3 - 2x^2 - 1$  \\
$ x^3 - 3x - 1$ & $x^4 + x^2 - x + 1$ & $ x^5 - x^4 + x^3 - x^2 + 2x - 1$  \\
$ x^3 - x^2 - 4x - 1$ & $x^4 + x^2 - 2x + 1$ & $x^5 - x^3 + 2x - 1$  \\
$ x^3 - x^2 + 2x + 2$ & $x^4 - 2x^3 + 2$ & $x^5 - x^4 + x^3 - x^2 + 1$  \\
$ x^3 - x^2 - 4x + 1$ & $x^4 - 2x^3 + x^2 - 2x + 1$ & $x^5 - x^4 + x^3 + x - 1$  \\
\hline
\end{longtable}

For degree 6 or 7 cases, we were only able to identify the number field \( \mathbb{Q}(\delta) \) with $\delta$ has minimal polynomial $x^6 - x^4 - 2x^3 + 2x + 1$ that fulfill the conditions required for the application of Theorem \ref{theorem}. In most number fields where we attempted to demonstrate that $\operatorname{rank}_{\Z}E_j(K)=0$, $j=1,2$,
Magma was unable to perform the computation. Consequently, we opted to compute the analytic rank, but this also proved unsuccessful. Our final approach was to determine whether $L(E_1/K, 1)$ and $L(E_2/K, 1)$ vanish or not. However, this method also failed to provide new number fields satisfying the conditions required to apply Theorem \ref{theorem}.

\end{document}